\newtheorem{lema}{Lemma}[section]
\newtheorem{theo}[lema]{Theorem}
\newcounter{teoremaganso}
\newtheorem{prop}[lema]{Proposition}
\newtheorem{coro}[lema]{Corollary}
\theoremstyle{definition}
\theoremstyle{remark}
\newtheorem{exam}[lema]{Example}
\title[Abel equations with invariant curves]{Upper Bounds of Limit Cycles in Abel Differential Equations with Invariant Curves}
\author{J.L. Bravo}
\address{Departamento de Matematicas, Universidad de Extremadura, 06071 Badajoz, Spain}
\email{trinidad@unex.es}
\author{L.A. Calder\'on*}
\address{Departamento de Matematicas, Universidad de Extremadura, 06071 Badajoz, Spain}
\email{lucalderonp@unex.es}
\author{M. Fern\'andez}
\address{Departamento de Matematicas, Universidad de Extremadura, 06071 Badajoz, Spain}
\email{ghierro@unex.es}
\thanks{* Corresponding author}
\thanks{
The authors are partially supported by Junta de Extremadura/FEDER grants numbers IB18023 and GR18023.
JLB and MF are also partially supported by MINECO/FEDER grant number MTM2017-83568-P}
\subjclass[2010]{34C25}
\keywords{Periodic solution; Limit cycle; Abel equation}
\begin{document}

\begin{abstract}
New criteria are established for upper bounds on the number of limit cycles of periodic Abel differential equations
having two periodic invariant curves, one of them bounded.
The criteria are applied to obtain upper bounds of either zero or one limit cycle for planar differential systems.
\end{abstract}

\maketitle

\section{Introduction and Statements of Main Results}

Bounding the number of periodic solutions of the generalized Abel equation
\begin{equation}\label{eq:genAbel}
	x'=\sum_{k=0}^n C_k(t) x^k,\qquad (t,x)\in\mathbb{R}^2,
\end{equation}
where $C_k(t)$ are $T$-periodic smooth functions, is an open problem known as the Smale-Pugh problem \cite{S}.

We shall denote by $u(t,x)$ the solution of Equation~\eqref{eq:genAbel} determined by $u(0,x)=x$. Recall that a solution $u(t,x)$  is {\it closed} or {\it periodic} if
$u(T,x)=x$, and that a {\it limit cycle} is a periodic solution which is isolated in the set of all periodic solutions.

When $n=1$, Equation~\eqref{eq:genAbel} is a linear
equation, and it is known that there is at most one limit
cycle.  When $n=2$, Equation~\eqref{eq:genAbel} is the
well-known Ricatti equation, and has at most two limit
cycles (see for example \cite{LN}). However, when $n=3$, A.~Lins-Neto~\cite{LN} proved that there exists no upper bound
on the number of limit cycles when $C_k$ belongs to the
whole family of trigonometric polynomials.  At this point,
in order to obtain upper bounds on the number of limit
cycles of Equation~\eqref{eq:genAbel}, one may impose some
additional constraints on its coefficients, $C_k(t)$ (see for
example
\cite{ABF2,ABF4,AGG,BF13,GG,GL,HL1,HL4,HL5,HZ,Ll,P,Pliss}).

In this work, we study \eqref{eq:genAbel} for $n=3$ with two invariant periodic curves of the form $a_0(t)x=b_0(t)$ and $a_1(t)x=b_1(t)$, where $a_0,b_0,a_1,b_1$ are smooth $T$-periodic functions, $a_0(t),a_1(t)\not\equiv 0$, and the first invariant curve is bounded, i.e., $a_0(t)\neq0$ for all $t\in[0,T]$.
By the change of variables $x\to x-b_0(t)/a_0(t)$, we may assume the first invariant curve to be $x=0$.
Therefore Equation \eqref{eq:genAbel} can be written as
$$
x'=x(C_1(t)+C_2(t)x + C_3(t)x^2).
$$ 	
For the second invariant curve, we shall assume that
$b_1(t)\equiv 1$, and that the set on which $a_1$ vanishes has
null measure.  If the functions $a_1,b_1$ are regular
enough, these assumptions are automatically satisfied. For
instance, if they are analytic, $a_1,b_1$ have a finite
number of zeros in $[0,T]$ with bounded multiplicity, and,
by uniqueness of solutions, the invariant curves are
disjoint, so that every zero of $b_1$ is a zero of $a_1$, and
this zero of $a_1$ has multiplicity greater than or equal to that of
the zero of $b_1$. Hence, dividing by $b_1(t)$ if necessary, one
may assume that $b_1(t)\equiv 1$.

Dividing the quadratic polynomial, $C_1(t)+C_2(t)x + C_3(t)x^2$ by $a_1(t)x-1$, one gets (whenever $a_1(t)\neq 0$)
\begin{equation}\label{eq:Abelinv}
x'=x\Big((a_1(t)x-1)(a_2(t)x-b_2(t))+ a_3(t)\Big),
\end{equation}
for certain $T$-periodic smooth functions $a_2,b_2,a_3$ integrable in $[0,T]$, and possibly with poles at the zeros
of $a_1$.

Imposing that $a_1(t)x-1=0$ is invariant, one obtains that, for any $t$ such that $a_1(t)\neq 0$, \eqref{eq:Abelinv}
reduces to
\begin{equation}\label{ode:1}
\begin{split}	
x'=&(a_1x-1)(a_2x-b_2)x -\frac{a_1'}{a_1}x\\
	=&a_1a_2 x^3-\left(a_1b_2+a_2\right)x^2+\left(b_2 - \frac{a_1'}{a_1}\right)x,
\end{split}	
\end{equation}
where the arguments of the functions have been omitted for clarity.

If $a_2(t)$ is identically null, \eqref{ode:1} is a Riccati equation with at most one non-null limit cycle (see for example \cite{LN}).
So we shall assume that $a_2(t)$ is not identically null.

The invariant curves divide the plane into connected components.
Motivated by the applications to the planar systems, we may consider just two regions, gluing together the connected components separated by $x=\pm\infty$ (see Figure~\ref{U}).
The objective of this paper is to obtain criteria for  Equation \eqref{ode:1} to have either zero or at most one limit cycle with graph included in one of the regions, $U$. By the change of variables $x\to -x$, it is not restrictive to assume that $(t,x)\in U$ for $x>0$ small
enough.
\begin{figure}[]
\begin{subfigure}[b]{0.3\textwidth}
\begin{picture}(150,220)
\put(0,0){\includegraphics[width=110pt]{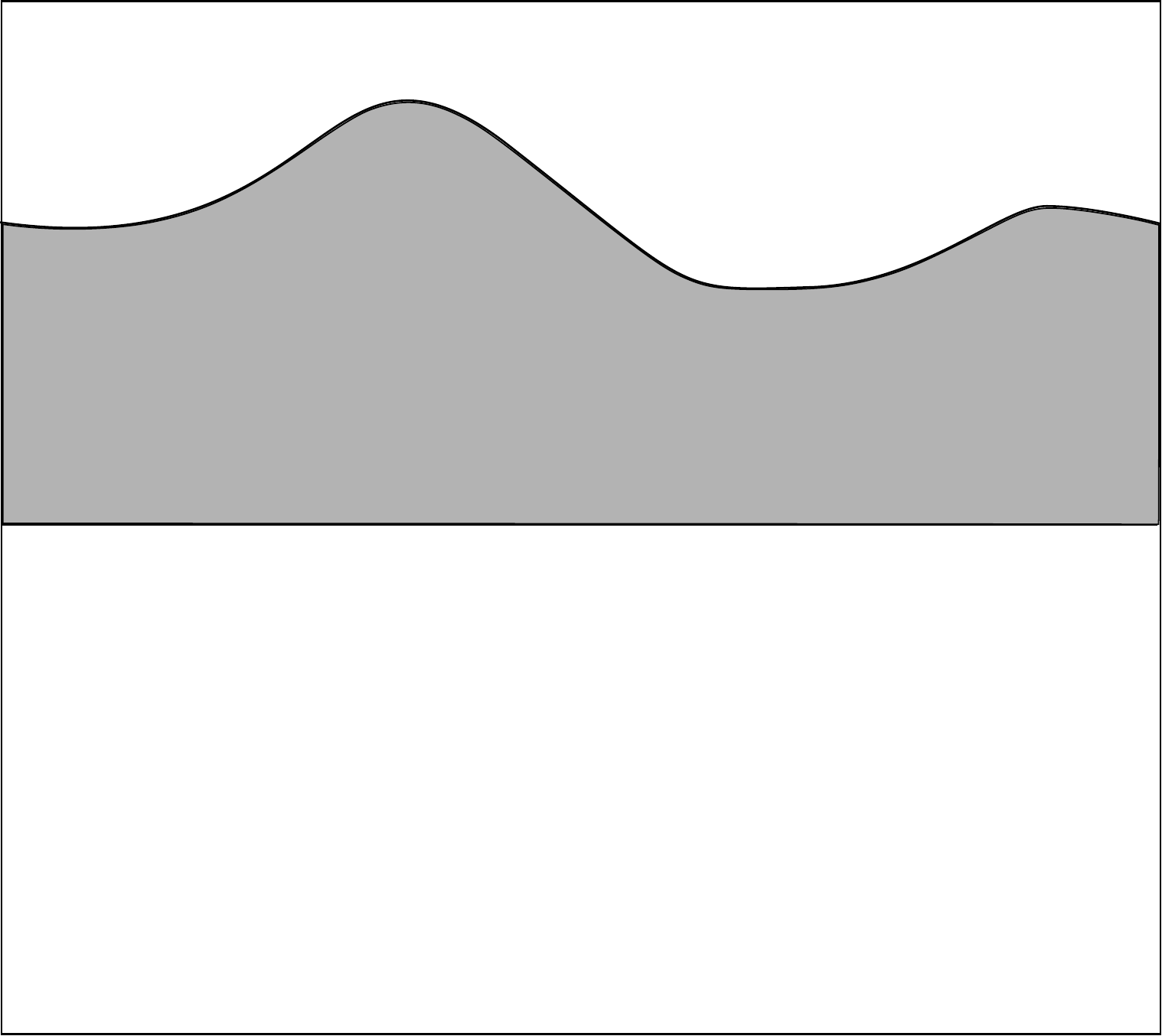}}
\put(90,60){U}
\put(100,40){t}
\end{picture}
\caption{$a_1>0$}
\end{subfigure}
\hfill
\begin{subfigure}[b]{0.3\textwidth}
\begin{picture}(150,220)
\put(0,0){\includegraphics[width=110pt]{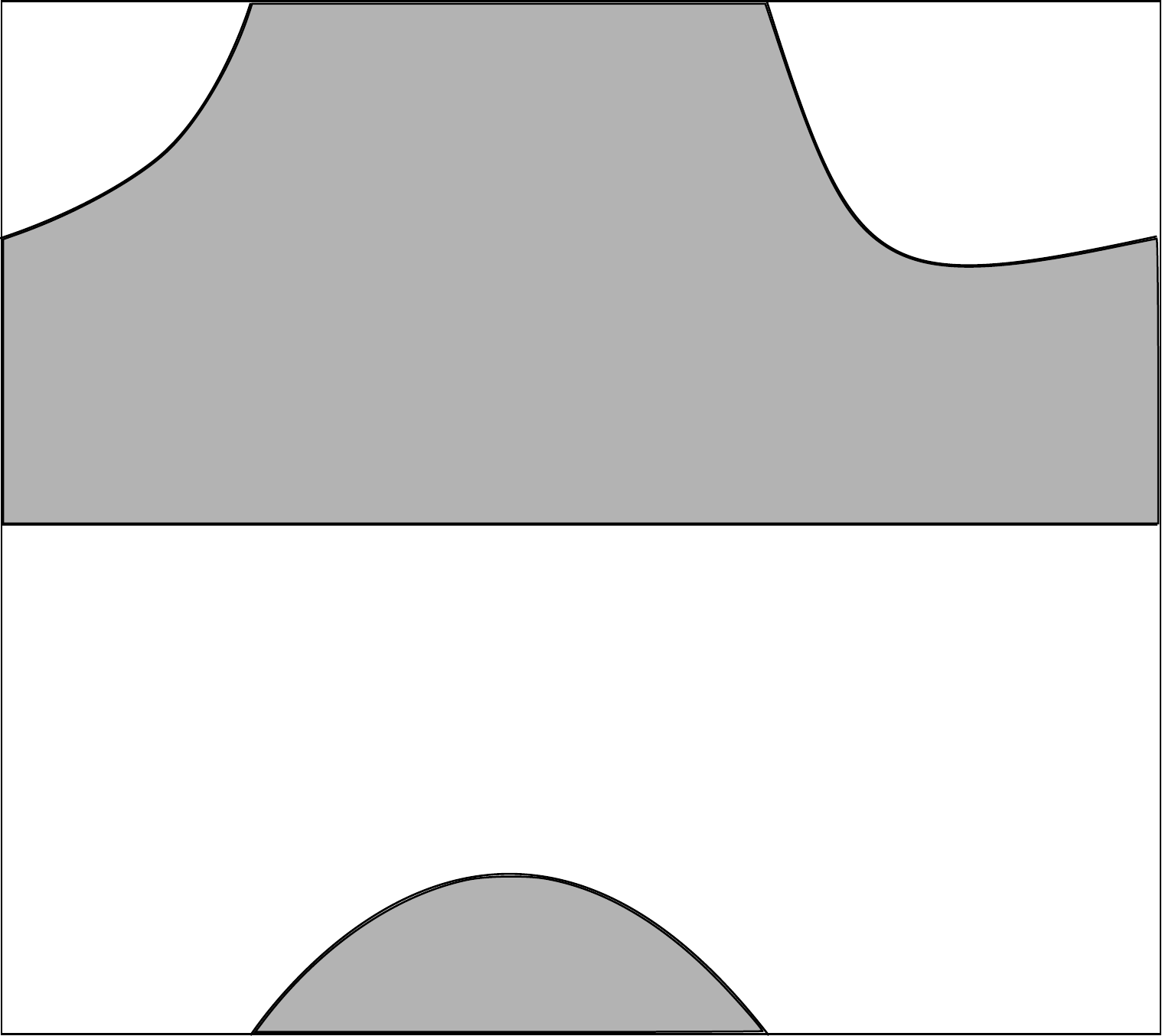}}
\put(90,60){U}
\put(100,40){t}
\end{picture}
\caption{$a_1$ changes sign}
\end{subfigure}
\hfill
\begin{subfigure}[b]{0.3\textwidth}
\begin{picture}(150,220)
\put(0,0){\includegraphics[width=110pt]{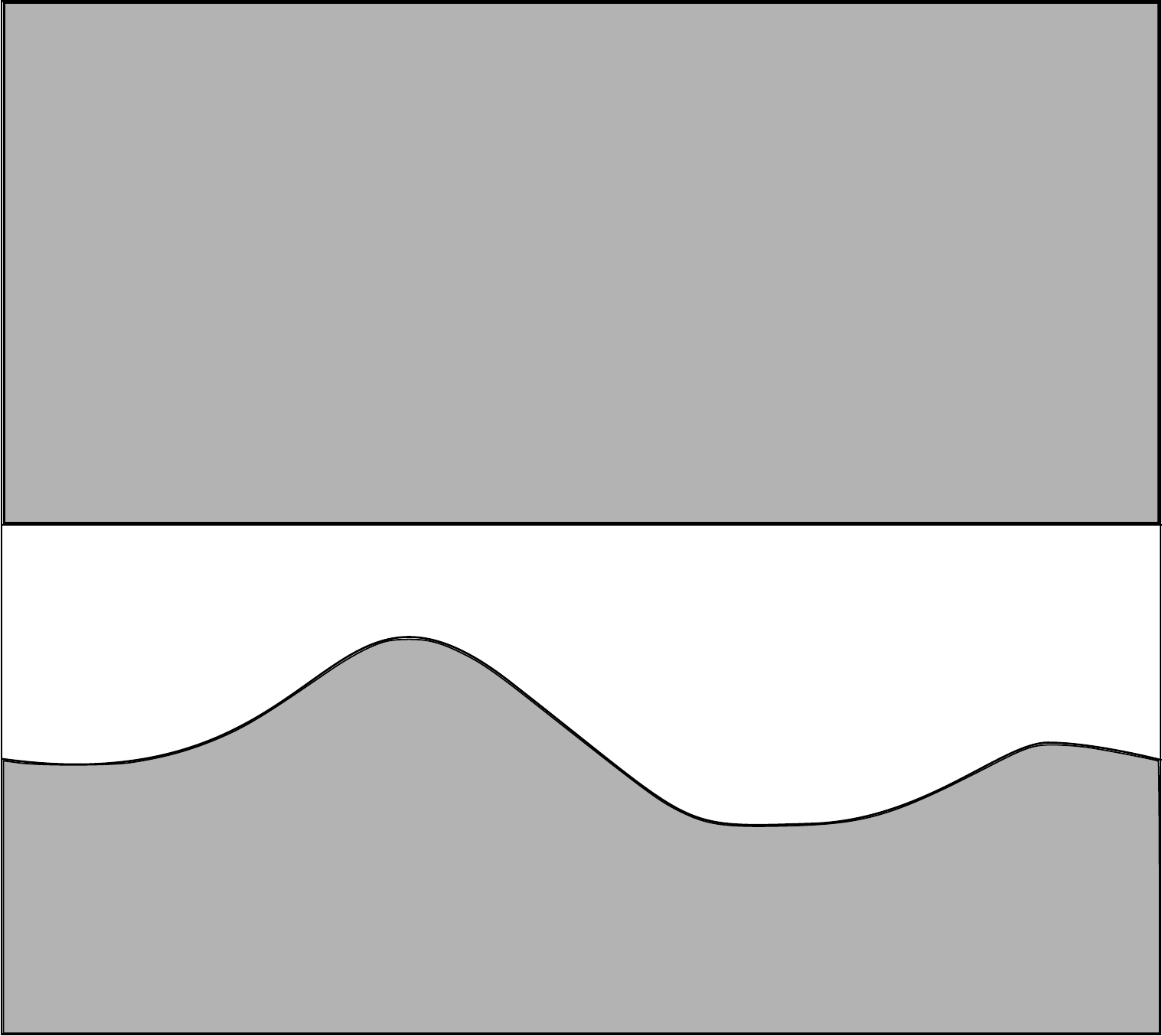}}
\put(90,60){U}
\put(100,40){t}
\end{picture}
\caption{$a_1<0$}
\end{subfigure}
\caption{Region $U$.}\label{U}
\end{figure}

Indeed, as limit cycles are bounded, we may reduce the study to $V\subset U$ defined as:
\begin{itemize}
\item[(i)] when $a_1(t)>0$ for all $t$, \[V=\{ (t,x):  0<x<1/a_1(t)  \};\]

\item[(ii)] when $a_1$ has zeros,
\[
V=\{ (t,x): x>0 \text{ and } 0<x<1/a_1(t) \text{ for } t \text{ with }a_1(t) >0 \};\]

\item[(iii)] when $a_1(t)<0$ for all $t$, \[V= \{(t,x): x>0 \text{ or } x<1/a_1(t) \}.\]
\end{itemize}

Let $\bar b_1$ be a smooth $T$-periodic function such that $\bar b_1(t)\neq0$ for every $t$, and denote
\[\bar a_1(t)=a_1(t) \bar b_1(t),\quad \bar a_2(t)= \frac{a_2(t)}{\bar b_1(t)}, \quad \bar b_2(t)=\frac{b_2(t)+\frac{\bar a_1'(t)}{\bar a_1}}{\bar b_1(t)}.\]
Then Equation~\eqref{ode:1}
becomes
\begin{equation}\label{eq:HL}
x'=(\bar a_1x-\bar b_1)(\bar a_2x-\bar b_2)x + \frac{\bar b_1'-\bar a_1' x}{\bar b_1}x.
\end{equation}
Motivated by the study of limit cycles in planar systems,
this equation was proposed by Huang, Liang, Llibre in
\cite{HL5} who, by assuming that $\bar a_2,\bar b_2$ have no
poles, obtained the following criterion:
\begin{prop} \cite[Proposition 10]{HL5} \label{prop:hl}
	Equation \eqref{eq:HL} has at most one limit cycle  (counted with multiplicity) with graph included in $V$ if, for all $t \in [0,T]$, one of the following conditions holds:
\begin{itemize}
\item[(i)] $\bar a_1(t)\neq0$ and there exists $\eta\in\mathbb{R}$ such that $\bar a_1(t) \bar b_2(t)+\eta \bar a_2(t) \bar b_1(t)+ (\bar a_1'(t)/\bar b_1(t)) \geq0\ (\leq0)$;
\item[(ii)] $\bar a_2(t) \geq 0$ ($\leq 0$);
\item[(iii)] Either $\bar a_1(t) \bar a_2(t) \geq0$ and $\bar b_1(t)\bar b_2(t) \leq0$, or $\bar a_1(t) \bar a_2(t) \leq0$ and $\bar b_1(t)\bar b_2(t) \geq0$.	
\end{itemize}
\end{prop}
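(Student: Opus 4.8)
The plan is to expand the right-hand side of \eqref{eq:HL} into the canonical cubic form
\[x'=A_3(t)x^3+A_2(t)x^2+A_1(t)x,\]
with $A_3=\bar a_1\bar a_2$, $A_2=-(\bar a_1\bar b_2+\bar a_2\bar b_1)-\bar a_1'/\bar b_1$ and $A_1=\bar b_1\bar b_2+\bar b_1'/\bar b_1$, and to exploit that $x\equiv 0$ and the second invariant curve $x=\bar b_1/\bar a_1$ are known periodic solutions bounding $V$. Since distinct solutions of a scalar equation never cross, the periodic solutions are totally ordered, so it suffices to assume that two limit cycles $\varphi_1<\varphi_2$ lie in the interior of $V$ and to reach a contradiction under each of (i)--(iii). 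The basic ingredient is that, for any two $T$-periodic solutions $x_i,x_j$, the function $\ln|x_i-x_j|$ is $T$-periodic, so integrating $(\ln|x_i-x_j|)'=(f(t,x_i)-f(t,x_j))/(x_i-x_j)$ over a period yields
\[\int_0^T\Big(A_3(x_i^2+x_ix_j+x_j^2)+A_2(x_i+x_j)+A_1\Big)\,dt=0.\]

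Applying this to the three pairs formed by $x\equiv 0$, $x_i$ and $x_j$ and taking the alternating sum, the quadratic terms telescope and leave the working identity
\[\int_0^T A_3\,x_i x_j\,dt=\int_0^T A_1\,dt=\int_0^T \bar b_1\bar b_2\,dt,\]
the last equality because $\bar b_1'/\bar b_1=(\ln|\bar b_1|)'$ integrates to zero. Taking $x_i=\varphi_1$, $x_j=\varphi_2$ settles (iii): the left-hand side has the sign of $\bar a_1\bar a_2$ (times the positive function $\varphi_1\varphi_2$) while the right-hand side has that of $\bar b_1\bar b_2$, and since $\bar a_2\not\equiv 0$ these signs are strictly opposite, a contradiction. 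For (ii) I would instead pair each $\varphi_k$ with the second invariant curve $x=\bar b_1/\bar a_1$; both integrals equal $\int_0^T A_1$, and subtracting gives $\int_0^T \bar a_2\bar b_1\,(\varphi_1-\varphi_2)\,dt=0$, whose integrand has constant sign because $\bar a_2$ does and $\bar b_1,\varphi_1-\varphi_2$ are nonvanishing of fixed sign, again a contradiction.

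Condition (i), with its free parameter $\eta$, does not reduce to the moment identity above; here the plan is to pass to a Möbius coordinate adapted to the two invariant curves, say $y=\bar a_1 x/(\bar b_1-\bar a_1 x)$, which sends $x=0$ to $y=0$ and the second curve to $y=\infty$, turning \eqref{eq:HL} into an equation $y'=g(t,y)$ with exactly the same limit cycles. The intended reading of (i) is that $\eta$ is precisely the parameter that makes $\partial^2 g/\partial y^2$ of constant sign on the relevant range; then the Poincaré map in the $y$ variable is convex (or concave), and since only one finite boundary solution ($y=0$) survives, a convex return map can have at most one interior fixed point. Each hypothesis thus forces at most one limit cycle in $V$.

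The step I expect to be the main obstacle is condition (i): verifying that the sign of $\partial^2 g/\partial y^2$ is governed by $\bar a_1\bar b_2+\eta\,\bar a_2\bar b_1+\bar a_1'/\bar b_1$, and choosing $\eta$ optimally, requires carrying out the full transformation computation. A second, pervasive difficulty is geometric: in the cases where $a_1$ vanishes or is negative (Figure~\ref{U}), the second invariant curve $x=\bar b_1/\bar a_1$ is unbounded, so it cannot serve directly as the bounded solution used in the argument for (ii), nor are the positivity and ordering statements automatic. Handling these cases uniformly---by working in the reciprocal coordinate near the curve and using the gluing at $x=\pm\infty$ that defines $V$---is where the careful delimitation of $V$ in (i)--(iii) of the introduction becomes essential, and where most of the technical care will be needed.
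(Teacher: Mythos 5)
First, a point of reference: the paper does not prove Proposition~\ref{prop:hl} at all --- it is imported verbatim from \cite[Proposition 10]{HL5} and used only for comparison --- so there is no in-paper argument to measure your proposal against; it must stand on its own. Judged that way, your treatment of (ii) is essentially sound: the identities $\int_0^T A_3 x_i x_j\,dt=\int_0^T A_1\,dt$ and $\int_0^T \bar a_2\bar b_1\varphi_k\,dt=\int_0^T\bar b_1\bar b_2\,dt$ are correct consequences of integrating the logarithmic derivatives (equivalently, the cofactors of the two invariant curves, which is exactly the mechanism of Section~\ref{sec:2}) along a periodic solution, and the non-crossing of periodic solutions makes $\varphi_1-\varphi_2$ sign-definite even when $V$ has two components.

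There are, however, three genuine gaps. (a) Condition (i) is not proven. Your guess that $\eta$ is the parameter making $\partial^2 g/\partial y^2$ sign-definite after a M\"obius change is unsubstantiated and, as stated, unlikely to work: a M\"obius change of the dependent variable does not keep a cubic equation polynomial (the $x^3$ term creates a pole at the image of $x=\infty$), and a convexity condition contains no free parameter, so it cannot be the source of the $\eta$ appearing in (i). The role of $\eta$ --- both in \cite{HL5} and in this paper's own Lemma~\ref{lema:principal} --- is as a coefficient in a linear combination of $p_x$ with the cofactors of the two invariant curves; one shows that combination has definite sign on $V$, which forces $\sign d'$ to be constant on the set of periodic solutions and hence at most one fixed point. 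That is the missing idea. (b) Your argument for (iii) fails precisely when $V$ has two connected components (the case $\bar a_1/\bar b_1<0$ everywhere): one cycle in each component gives $\varphi_1\varphi_2<0$, so the two sides of $\int_0^T A_3\varphi_1\varphi_2\,dt=\int_0^T\bar b_1\bar b_2\,dt$ have the \emph{same} sign and no contradiction results; moreover, when the integrands are forced to vanish you only get $\bar a_1\bar a_2\equiv0$, which must then be dispatched separately as a Riccati/Bernoulli case. (c) The proposition counts the limit cycle with multiplicity; excluding two distinct periodic solutions does not exclude a single double cycle. The cofactor-combination argument handles multiplicity automatically via the sign of $d'$, whereas your pairwise integral identities require an additional perturbation or limiting step that you have not supplied.
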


The objective of the paper is to establish new criteria for Equation \eqref{ode:1} to have either zero or at most one limit cycle  (counted with multiplicity) with graph in $V$.

We say a $T$-periodic function $f$ has positive definite sign if $f(t)\geq 0$ for all $t\in[0,T]$. Analogously,
$f$ has negative definite sign if $f(t)\leq 0$ for all $t\in[0,T]$ and $f$ has definite sign if it has either positive
or negative definite sign.

The main results are:
\begin{theo}\label{criterium1}
Assume there exists $\eta\in\mathbb{R}$ such that $b_2+\eta a_1'/a_1$ has positive (negative) definite sign  and for each  $t\in[0,T]$ the following conditions hold:
\begin{itemize}
	\item[(i)] if $a_1(t)< 0$  
	then $a_2(t)\leq 0$ ($\geq 0$);
	\item[(ii)] if $a_1(t)> 0$ then
	$a_1(t)b_2(t)-a_2(t)+\eta a_1'(t) \geq 0$, ($\leq0$);
\end{itemize}
with the inequalities in $(i)$ and $(ii)$ being strict for all $t\in P$, where
$P$ is a positive measure set of $[0,T]$. Then Equation \eqref{ode:1} has no non-trivial limit cycle  with graph included in $V$.
\end{theo}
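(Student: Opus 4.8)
The plan is to prove the stronger statement that \eqref{ode:1} has \emph{no} non-trivial periodic solution with graph in $V$; since every limit cycle is in particular a periodic solution, this gives the theorem. So let $\phi$ be such a solution and assume, as we may after the reduction $(t,x)\in U$ for small $x>0$, that $\phi(t)>0$ for all $t$ (by invariance of $x=0$ and uniqueness, $\phi$ cannot change sign). The inequalities defining $V$ give $a_1(t)\phi(t)<1$, hence $1-a_1(t)\phi(t)>0$ for every $t$, so the ``M\"obius logarithm''
\[
L(t)=\log\frac{\phi(t)}{1-a_1(t)\phi(t)}
\]
is a well-defined, continuous, $T$-periodic function; this is exactly where the two invariant curves $x=0$ and $a_1x=1$ enter.

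The computation I would carry out first is that of $L'$ along $\phi$. From $\phi'=\phi\big((a_1\phi-1)(a_2\phi-b_2)-a_1'/a_1\big)$ one obtains
\[
(\log\phi)'=a_1a_2\phi^2-(a_1b_2+a_2)\phi+\Big(b_2-\tfrac{a_1'}{a_1}\Big),\qquad \big(\log(1-a_1\phi)\big)'=a_1a_2\phi^2-a_1b_2\phi.
\]
The decisive structural fact---forced by having \emph{two} invariant curves---is that on subtracting these the $a_1a_2\phi^2$ and the $a_1b_2\phi$ terms cancel, leaving the affine expression
\[
L'=b_2-a_2\phi-\frac{a_1'}{a_1}.
\]
Writing $b_2=g-\eta\,a_1'/a_1$, where $g:=b_2+\eta a_1'/a_1$ is the function of definite sign given by hypothesis, and setting $W:=L+(\eta+1)\log|a_1|$, this becomes $W'=g-a_2\phi$. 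When $a_1$ never vanishes (cases (i) and (iii)) the function $W$ is $T$-periodic, so integrating yields
\[
\int_0^T\big(g(t)-a_2(t)\phi(t)\big)\,dt=0.
\]

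The contradiction then comes from the sign of the integrand. Suppose $g\geq0$. On $\{a_1<0\}$ condition (i) gives $a_2\leq0$, whence $g-a_2\phi\geq0$ because $\phi>0$; on $\{a_1>0\}$ condition (ii) is precisely $a_1g-a_2\geq0$, and multiplying by $\phi>0$ and using $0<a_1\phi<1$ together with $g\geq0$ gives $a_2\phi\leq a_1g\phi\leq g$, so again $g-a_2\phi\geq0$. On the positive-measure set $P$ the corresponding inequality is strict and $\phi>0$, so $g-a_2\phi>0$ there; hence $\int_0^T(g-a_2\phi)\,dt>0$, contradicting the displayed identity. The subcase $g\leq0$ is entirely symmetric, and this disposes of cases (i) and (iii).

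The step I expect to be the real obstacle is case (ii), in which $a_1$ has zeros: there the term $(\eta+1)\log|a_1|$ in $W$ (equivalently the pole of $a_1'/a_1$) is singular, so one cannot simply integrate $W'$. My plan is to work directly with $L$, which remains continuous and $T$-periodic across the zeros of $a_1$ since $\phi$ stays finite and $1-a_1\phi\to1$ there, and to observe that the pole of $b_2$ at each zero of $a_1$ must cancel that of $a_1'/a_1$---this cancellation is exactly what keeps $g$ finite and of definite sign. Thus $\int_0^T L'\,dt=0$ is a legitimate identity involving the integrable combination $b_2-a_1'/a_1$, and the delicate point is to re-express $b_2-a_1'/a_1$ in terms of $g$ and $a_2\phi$ without ever writing the divergent integral of $a_1'/a_1$ on its own. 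Once this accounting of the singularities is made rigorous, the same sign analysis as in cases (i) and (iii) produces the contradiction and finishes the proof.
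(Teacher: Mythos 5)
Your route is genuinely different from the paper's: instead of controlling the sign of the integrand of $d'(x)$ via the combination $G=p_x+\alpha p_1+\beta p_2+\cdots$ of Section~\ref{sec:2} and Sturm's theorem, you integrate over a period the logarithmic derivative of $\phi/(1-a_1\phi)$ along a hypothetical periodic solution, which amounts to subtracting the two cofactor identities $\int_0^T p_1(t,\phi)\,dt=\int_0^T p_2(t,\phi)\,dt=0$. The cancellation $L'=b_2-a_2\phi-a_1'/a_1$ is correct, and when $a_1(t)>0$ for all $t$ your sign analysis of $g-a_2\phi$ is complete and even yields the stronger conclusion that there is no non-trivial periodic solution at all; for that case this is a clean and more elementary argument than the paper's.

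However, there are two genuine gaps. First, the case in which $a_1$ has zeros --- which the paper stresses is the main point of the theorem --- is only announced, not proved. Your periodic identity there reads $\int_0^T\big(b_2-a_1'/a_1-a_2\phi\big)\,dt=0$, and wherever $a_1\neq0$ the integrand equals $g-(1+\eta)\,a_1'/a_1-a_2\phi$; the term $(1+\eta)a_1'/a_1$ has no sign, so the pointwise nonnegativity you need simply fails, and removing it requires a principal-value cancellation of $\int a_1'/a_1$ across the zeros of $a_1$ (worse, the paper allows $g$ to have even-order poles there, so $g$ and $a_2\phi$ need not be separately integrable). None of this accounting is carried out, and it is not clear it can be made to work in the form you propose. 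Second, when $a_1(t)<0$ for all $t$ the region $V$ has a second connected component, $\{(t,x):x<1/a_1(t)\}$, on which $\phi<0$; your reduction to $\phi>0$ does not apply there (the change $x\to-x$ has already been spent fixing the region $U$), and the inequality breaks down: with $g\geq0$, $a_2\leq0$ and $\phi<0$ one gets $a_2\phi\geq0$, so $g-a_2\phi$ has no definite sign and the integral identity yields no contradiction. The paper excludes this component by a separate argument (Proposition~\ref{prop:signo_definido} combined with the instability of the periodic solution $x=1/a_1(t)$), and your proof needs an analogue of it.
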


\begin{theo}\label{criterium2}
Assume  there exists $\eta\in\mathbb{R}$ such that $b_2+\eta a_1'/a_1$ has positive (negative) definite sign, $b_2$ is not identically null, and for each  $t\in[0,T]$ the following conditions hold:
\begin{itemize}
	\item[(i)] if $a_1(t)<  0$,  then $a_2(t)\geq 0$ ($\leq 0$);
	\item[(ii)] if $a_1(t)> 0$ then
	$a_1(t)b_2(t)-a_2(t)+\eta a_1'(t) \geq 0$, ($\leq0$);
\end{itemize}with the inequalities in $(i)$ and $(ii)$ being strict for all $t\in P$, where
$P$ is a positive measure set of $[0,T]$.
Then Equation \eqref{ode:1} has at most one non-trivial limit cycle  with graph included in $V$.
Moreover,  this upper bound is attained.
\end{theo}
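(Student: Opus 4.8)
My plan is to strip off the two invariant curves by a Möbius change of variables, reducing Equation~\eqref{ode:1} to a scalar first-order equation whose linear part is exactly the sign-definite function appearing in the hypotheses. Writing $\phi:=b_2+\eta a_1'/a_1$ and $\psi:=a_1b_2-a_2+\eta a_1'=a_1\phi-a_2$, I first set $y=x/(1-a_1x)$, which carries the invariant curves $x=0$ and $a_1x=1$ to $y=0$ and $y=\infty$ and maps $V$ onto $\{y>0\}$; a direct computation from \eqref{ode:1} gives $y'/y=(b_2-a_1'/a_1)-a_2x$. Multiplying $y$ by the positive $T$-periodic weight $|a_1|^{\eta+1}$ (licit off the null set where $a_1$ vanishes, and harmless for counting periodic orbits since it is merely a periodic rescaling) I pass to $Y=|a_1|^{\eta+1}y$, for which
\begin{equation*}
\frac{Y'}{Y}=\phi-a_2x,\qquad x=\frac{y}{1+a_1y},\quad 1+a_1y=\frac1{1-a_1x}>0 \ \text{ on } V.
\end{equation*}
Thus non-trivial limit cycles of \eqref{ode:1} in $V$ correspond exactly to positive periodic solutions $Y$, and the parameter $\eta$ enters only through the weight, so it suffices that the sign conditions hold for \emph{some} $\eta$.

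The decisive pointwise estimate concerns the sign of $Y'$, equivalently of $\phi-a_2x$. Where $a_1>0$, the invariant-curve bound $0<a_1x<1$ together with $a_2\le a_1\phi$ (condition (ii), $\psi\ge0$) gives $a_2x\le a_1\phi\,x\le\phi$, hence $\phi-a_2x\ge\phi(1-a_1x)\ge0$; so $Y$ is non-decreasing on $\{a_1>0\}$. Under the hypotheses of Theorem~\ref{criterium1}, where $a_2\le0$ on $\{a_1<0\}$, one also has $\phi-a_2x\ge\phi\ge0$ there, so $Y'\ge0$ on all of $[0,T]$. A periodic $Y$ must then be constant with $\phi-a_2x\equiv0$, contradicting the strict inequalities on the positive-measure set $P$; this proves Theorem~\ref{criterium1}.

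For Theorem~\ref{criterium2} the sole change is condition (i): now $a_2\ge0$ on $\{a_1<0\}$, so $Y'$ may be negative, but \emph{only} on $\{a_1<0\}$, and precisely there $Y\mapsto x$ is increasing, whence $\partial_Y(Y'/Y)=-a_2\,\partial_Yx\le0$, i.e. the flow contracts in the $Y$-direction. To bound the positive periodic solutions I would argue from two of them, $Y_1<Y_2$ (equivalently $0<x_1<x_2$ pointwise). Periodicity of $\log(Y_2/Y_1)$ yields the first-order relation $\int_0^T a_2\,(x_2-x_1)\,dt=0$, while the relations for $\log Y_i$ give the Poincaré multipliers $\exp\mu(x_i)$ with $\mu(x_i)=-\int_0^T a_2x_i(1-a_1x_i)\,dt$; subtracting and using the first-order relation collapses their difference to $\mu(x_2)-\mu(x_1)=\int_0^T a_1a_2\,(x_2^2-x_1^2)\,dt$. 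Since the trivial solution $Y=0$ is non-attracting ($\int_0^T\phi\,dt=\int_0^T b_2\,dt\ge0$, with $b_2\not\equiv0$), the ordering forces the inner cycle attracting and the outer repelling, i.e. $\mu(x_2)-\mu(x_1)>0$, so a contradiction will follow once $\int_0^T a_1a_2(x_2^2-x_1^2)\,dt\le0$. This sign is immediate on $\{a_1<0\}$ (there $a_1a_2\le0$), but it is the genuine difficulty on $\{a_1>0\}$, where $a_2$ need not be signed; I expect to control that contribution by combining $a_2\le a_1\phi$ with the constraint $x_i<1/a_1$ and the contraction estimate above, and I anticipate this to be the main obstacle of the proof.

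Finally, to show the bound is attained I would exhibit an explicit equation meeting the hypotheses with $a_1$ changing sign: on $\{a_1<0\}$ the term $a_2\ge0$ supplies the downward excursions of $Y$ needed to close an orbit, while $Y'\ge0$ on $\{a_1>0\}$ and the contraction on $\{a_1<0\}$ together produce a single hyperbolic attracting positive periodic solution of $Y'/Y=\phi-a_2x$. Transporting it back by $x=y/(1+a_1y)$ furnishes one limit cycle of \eqref{ode:1} with graph in $V$, and the upper bound just established guarantees there is no second one.
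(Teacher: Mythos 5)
Your reduction is correct and, in fact, secretly equivalent to the paper's machinery: the change $y=x/(1-a_1x)$ followed by the weight $|a_1|^{\eta+1}$ gives $Y'/Y=\phi-a_2x$ with $\phi=b_2+\eta a_1'/a_1$, the periodicity relation $\int_0^T a_2x_i\,dt=\int_0^T\phi\,dt$ is right, and your multiplier $\mu(x_i)=-\int_0^T a_2x_i(1-a_1x_i)\,dt$ coincides with $\int_0^T G(t,u_i)\,dt$ for the paper's combination $G=p_x+\alpha p_1+\beta p_2+(1+\alpha+\eta)a_1'/a_1$ with $\alpha=-2$, $\beta=0$, which collapses to $G(t,x)=a_1a_2x^2-\phi$. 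But the proof is not complete: the step you yourself flag as "the main obstacle'' --- showing $\mu(x_2)-\mu(x_1)=\int_0^T a_1a_2(x_2^2-x_1^2)\,dt\le 0$ --- is a genuine gap, and it is the wrong quantity to try to bound. Subtracting the two multipliers cancels the term $-\int_0^T\phi\,dt$, which is exactly what makes the sign controllable; on $\{a_1>0\}$ the hypotheses put no sign on $a_2$, so $a_1a_2(x_2^2-x_1^2)$ can be positive pointwise and there is no evident global mechanism to rescue the integral. Your appeal to an ordering argument ("inner cycle attracting, outer repelling'') is also not rigorous as stated for non-hyperbolic cycles and does not by itself reduce the count to one with multiplicity.

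The repair is to bound each multiplier separately rather than their difference: by the first-order relation, $\mu(x_i)=\int_0^T\bigl(a_1a_2x_i^2-\phi\bigr)\,dt$, and the integrand is $\le 0$ pointwise on $V$. Indeed, on $\{a_1<0\}$ hypothesis (i) gives $a_1a_2\le 0$ while $\phi\ge 0$; on $\{a_1>0\}$ the quadratic $x\mapsto a_1a_2x^2-\phi$ has its only critical point at $x=0$ with value $-\phi\le 0$ and takes the value $(a_2-a_1b_2-\eta a_1')/a_1\le 0$ at $x=1/a_1$ by hypothesis (ii), hence is $\le 0$ on all of $[0,1/a_1]$; strictness on $P$ makes each integral negative. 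Then every limit cycle in a connected component of $V$ is hyperbolic and stable, so there is at most one per component (the case $a_1<0$ everywhere, where $V$ has two components and your map is no longer order-preserving across $x=\pm\infty$, is handled separately by Proposition~\ref{prop:signo_definido}, since then (i) forces $a_2$ to have definite sign). Finally, your sketch for attainment is not carried out and is more elaborate than needed: constant coefficients $a_1>0$, $0<b_2/a_2<a_1$ already give an equilibrium $x=b_2/a_2\in(0,1/a_1)$ realizing the bound.
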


Note that  $a_1(t)$ and $a_2(t)$ can have any number of zeros. Moreover, since $a_2,b_2$ may also have poles, $\eta$ must be chosen such that $b_2+\eta a_1'/a_1$ has only poles of even order.

An important application of the results above is to bound the number of limit cycles of polynomial planar systems. Indeed, many
planar systems reduce to an Abel equation after a change of variables (see~\cite{DLP,Lloyd} for example). In these systems, the origin is a singular point that is transformed into the periodic solution $x=0$, so that, if a second invariant curve is known, the equation is of the type
\eqref{ode:1} and Theorems \ref{criterium1} and \ref{criterium2} apply.

In Section 4, we shall detail this for homogeneous planar systems studied in \cite{CLl,HL5} and compare 
the previous known criteria with the new ones obtained for this case.

\section{Preliminaries}\label{sec:2}

Consider the equation
\begin{equation}\label{eq:esc}
x'=p(t,x),
\end{equation}
where $p \colon \mathbb{R}^2 \to \mathbb{R}$ is sufficiently smooth, $T$-periodic with respect to $t$, i.e., $p(t,x)=p(t+T,x)$, and polynomial in $x$. 
Let $A=\{x\in\mathbb{R}: u(T,x) \text{ is defined}\}$.
Denote $d(x)=u(T,x)-x$, the displacement application. Then,
$x_0$ is a zero of $d(x)$ if and only if $u(t,x_0)$ is a periodic solution of \eqref{eq:esc}. So  isolated zeros of $d(x)$ are  initial
conditions of  limit cycles of \eqref{eq:esc}.

Deriving \eqref{eq:esc} with respect to the initial condition, one obtains
\begin{equation}\label{eq:d'}
d'(x)=\exp\left(\int_{0}^{T}p_x(t,u(t,x))\,dt\right)-1.
\end{equation}
When  $d(x)=0$ and $d'(x)\neq0$, the limit cycle $u(t,x)$ is called hyperbolic. In this case the sign of $d'(x)$ determines its stability.

Using elementary analysis, it is easy to prove the following result:
\begin{lema}
Let $I\subset A$ be an interval, and assume that $d'(x)> 0$ (resp. $<0$)  for all $x\in I$
such that $u(t,x)$ is periodic. Then there is at most one periodic solution with initial
condition in $I$.
\end{lema}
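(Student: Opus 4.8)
The plan is to argue by contradiction, using the fact that the hypothesis forces every zero of the displacement map $d$ in $I$ to be a simple \emph{upward} crossing, and that two such crossings cannot occur without a downward crossing between them. Since $I\subset A$, the map $d$ is $C^1$ on $I$ with derivative given by \eqref{eq:d'}; in particular $d$ is continuous, which is all the regularity the argument needs. The subtlety to keep in mind is that $d'>0$ is assumed only at the zeros of $d$ (the periodic solutions), not on all of $I$, so one cannot simply assert that $d$ is monotone. The argument must instead be local around each zero, combined with a continuity/intermediate value step on the interval between two zeros.

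Concretely, suppose there were two periodic solutions with initial conditions $x_1<x_2$ in $I$, so that $d(x_1)=d(x_2)=0$. Since $u(t,x_1)$ is periodic, the hypothesis gives $d'(x_1)>0$; together with $d(x_1)=0$ this yields some $\delta>0$ with $d(x)>0$ on $(x_1,x_1+\delta)$. First I would set $x_3=\inf\{x\in(x_1,x_2]:d(x)=0\}$. Because $d$ is continuous, the defining set is closed in $[x_1,x_2]$ and the infimum is attained, so $d(x_3)=0$; moreover $x_3>x_1$ (as $d>0$ just to the right of $x_1$) and $x_3\le x_2$.

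Next I would extract the contradiction at $x_3$. By definition of $x_3$, the function $d$ has no zero in $(x_1,x_3)$, and since $d>0$ immediately to the right of $x_1$, the intermediate value theorem forces $d>0$ throughout $(x_1,x_3)$; in particular $d(x)>0$ for $x$ slightly less than $x_3$. On the other hand $u(t,x_3)$ is periodic, so the hypothesis gives $d'(x_3)>0$, which together with $d(x_3)=0$ implies $d(x)<0$ for $x$ slightly less than $x_3$. These two statements are incompatible, giving the desired contradiction. Hence $d$ has at most one zero in $I$, i.e.\ at most one periodic solution has initial condition in $I$. The parenthetical case $d'<0$ is identical after replacing $d$ by $-d$ (equivalently, reversing all inequalities and arguing with downward crossings). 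I do not expect any genuine obstacle here; the only point requiring care is the one flagged above, namely that the sign information lives only at the zeros of $d$, which is precisely why the proof must proceed through consecutive zeros rather than through global monotonicity of $d$.
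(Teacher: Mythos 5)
Your argument is correct and complete. The paper itself offers no proof of this lemma, stating only that it follows ``using elementary analysis,'' and your contradiction argument --- locating the first zero $x_3$ of $d$ to the right of $x_1$, deducing $d>0$ on $(x_1,x_3)$ from the intermediate value theorem, and contradicting the sign of $d$ just left of $x_3$ forced by $d(x_3)=0$, $d'(x_3)>0$ --- is precisely the standard elementary argument the authors are alluding to. You also correctly identify and handle the one genuine subtlety, namely that the sign hypothesis on $d'$ holds only at zeros of $d$, so global monotonicity is unavailable and the reasoning must pass through consecutive zeros.
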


Let $q(t,x)$ be a polynomial in $x$ with coefficients that are smooth, $T$-periodic with respect to $t$, functions.
In this context, we say that $q(t,x)=0$ is a periodic algebraic invariant curve if there exists  $Q(t,x)$ which is $T$-periodic with respect to $t$ and polynomial in $x$, called the cofactor of $q(t,x)$, such that
\[
q_t(t,x)+q_x(t,x)p(t,x)=q(t,x) Q(t,x).
\]
Note that the coefficients of $Q$ as a polynomial in $x$ are smooth functions that might have poles at the zeros of the coefficients of $q(t,x)$.

Let $u(t,x)$ be a periodic solution of \eqref{eq:esc}. By
uniqueness of solutions, either 
$q(t,u(t,x))\neq 0$ for all $t$ or $q(t,u(t,x))\equiv 0$. Assume that the first possibility is the
case and that the set of $t$ such that the coefficients of
$Q(t,x)$ have poles has null measure.  Then
\[
\int_0^T Q(t,u) \, dt=\int_0^T \frac{ q_t(t,u)+q_x(t,u)p(t,u)}{q(t,u)}\,dt=\int_0^T \frac{d\left(\ln|q(t,u)|\right)}{dt}\,dt=0,
\]
where $u=u(t,x)$.
Therefore, for any $\alpha\in\mathbb{R}$, the sign of $d'(x)$ is the sign of
\[
\int_{0}^{T} \left(p_x(t,u) + \alpha Q(t,u) \right) \,dt.
\]

Now consider \eqref{ode:1}. In this case,
\[
p_x(t,x)=3a_1a_2x^2-2\left(a_1b_2+a_2\right)x+b_2-\frac{a_1'}{a_1},
\]
and the cofactors of $x=0$ and $a_1x-1=0$ are
\[
p_1(t,x)=(a_1 x - 1) (a_2 x - b_2) -\frac{a_1'}{a_1}\]
and
\[
p_2(t,x)= a_1 x (a_2 x - b_2).
\]

Moreover, since $a_1$ is $T$-periodic, then
\[
\int_0^T \frac{a_1'}{a_1}\,dt=0.
\]
For any $\alpha,\beta,\eta\in\mathbb{R}$, denote
\begin{equation}\label{eq:G}
\begin{split}
G(t,x)=& p_x(t,x)+\alpha p_1(t,x)+\beta p_2(t,x) +(1+\alpha + \eta) \frac{a'_1(t)}{a_1(t)} \\
&=(3 + \alpha + \beta) a_1 a_2 x^2  -  \left((2 + \alpha) a_2 + (2 + \alpha + \beta) a_1 b_2\right) x \\
&+  (1 + \alpha)b_2 +\eta\frac{a_1'}{a_1}.
\end{split}
\end{equation}

A direct consequence is the following result: \begin{lema}\label{lema:principal}
Assume there exist $\alpha,\beta,\eta\in\mathbb{R}$ such that for every $(t,x)\in V$ except for $t$ in
a zero-measure set
\[
G(t,x) \geq 0\quad \text{(resp. }\leq 0\text{)},
\]
and there exists a positive measure set $P$ such that for every $t\in P$ the inequality is strict.
Then there is at most one limit cycle in each connected component of $V$.
Moreover, if one exists then it is hyperbolic and unstable (resp. stable).
\end{lema}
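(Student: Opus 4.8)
The plan is to prove the statement by showing that the hypotheses force $d'(x)>0$ (resp.\ $<0$) at the initial condition of every periodic solution whose graph lies in $V$, and then to read off uniqueness, hyperbolicity and stability from the first Lemma of Section~\ref{sec:2}. First I would fix a periodic solution $u=u(t,x)$ of \eqref{ode:1} with graph contained in $V$. Since limit cycles are bounded and the invariant curves $x=0$ and $a_1x-1=0$ separate the plane, $u(\cdot,x)$ remains in a single connected component of $V$ and therefore stays bounded away from both curves; hence $u(t,x)\neq0$ and $a_1(t)u(t,x)-1\neq0$ for all $t$. As explained in the discussion preceding \eqref{eq:G}, this makes $p_1(t,u)=\tfrac{d}{dt}\ln|u|$ and $p_2(t,u)=\tfrac{d}{dt}\ln|a_1u-1|$ genuine derivatives of smooth $T$-periodic functions, so that
\[
\int_0^T p_1(t,u)\,dt=\int_0^T p_2(t,u)\,dt=0.
\]

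Combining these with $\int_0^T (a_1'/a_1)\,dt=0$ and the definition \eqref{eq:G} of $G$, I obtain, for any $\alpha,\beta,\eta\in\mathbb{R}$,
\[
\int_0^T G(t,u(t,x))\,dt=\int_0^T p_x(t,u(t,x))\,dt,
\]
so by the variational formula \eqref{eq:d'} the sign of $d'(x)$ coincides with the sign of $\int_0^T G(t,u(t,x))\,dt$. Choosing $\alpha,\beta,\eta$ as provided by the hypothesis, I would then argue that $G(t,u(t,x))\geq0$ for almost every $t\in[0,T]$, the excluded instants forming a null-measure set, while $G(t,u(t,x))>0$ for every $t$ in the positive-measure set $P$, since $(t,u(t,x))\in V$ throughout. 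Consequently $\int_0^T G(t,u(t,x))\,dt>0$ and $d'(x)>0$.

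Finally, because this holds at the initial condition of every periodic solution with graph in a fixed connected component of $V$, the first Lemma of Section~\ref{sec:2}, applied to the interval of initial conditions that the component determines at $t=0$, yields at most one periodic solution there. Since $d'(x)>0\neq0$, such a solution is hyperbolic, and as the Poincar\'e map has derivative $d'(x)+1>1$ the corresponding limit cycle is unstable; the reversed inequalities give $d'(x)<0$ and hence a hyperbolic stable cycle, which is the (resp.) case. The step I expect to be most delicate is the vanishing of the cofactor integrals: it relies on the solution staying strictly inside $V$, away from both invariant curves, together with the null-measure assumption on the poles of $a_2,b_2$ and of $a_1'/a_1$ at the zeros of $a_1$, so that the logarithmic primitives $\ln|u|$ and $\ln|a_1u-1|$ are smooth and periodic along $u$ despite the singularities of $G$ on the plane. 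Once this is secured, the remainder is the routine sign bookkeeping sketched above.
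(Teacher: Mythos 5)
Your proof is correct and follows essentially the same route as the paper: the lemma is stated there as a direct consequence of the preceding computation that the cofactor integrals $\int_0^T p_1(t,u)\,dt$, $\int_0^T p_2(t,u)\,dt$ and $\int_0^T (a_1'/a_1)\,dt$ all vanish along a periodic solution avoiding the invariant curves, so that $\sign d'(x)=\sign\int_0^T G(t,u(t,x))\,dt$, and you have reproduced exactly that argument together with the correct appeal to Lemma~2.1. The delicate point you flag (integrability across the poles at the zeros of $a_1$) is precisely the assumption the paper makes, so nothing is missing.
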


To check when $G(t,x) \geq0$ (resp. $\leq0$) for each
$t\in[0,T]$, we shall use Sturm's theorem applied to $G$ as a
polynomial in $x$.  Recall that the Sturm sequence for $G$ when
$a_1(t),a_2(t)\neq0$ and $\alpha+\beta+3\neq 0$ is
$\{q_0(x),q_1(x),q_2\}$, where $q_0(x)=G(t,x)$,
$q_1(x)=G_x(t,x)$, and
\[
q_2=\frac{(2 + \alpha)^2 a_2 }{4 (3 + \alpha + \beta) a_1}
 +\frac{ (2 + \alpha + \beta)^2 a_1 b_2^2}{4 (3 + \alpha + \beta)  a_2}
-\frac{ (2 + \alpha^2 + \alpha (4 + \beta)) b_2}{2 (3 + \alpha + \beta)}+\eta \frac{a_1'}{a_1}.
\]
Let $a\in\mathbb{R}$ with $q_0(a)\neq0$. Define $v(a)$ as the number of changes of sign in the sequence $\{q_0(a), q_1(a), q_2\}$, omitting the zeros.

\begin{theo} [{Sturm's theorem, see for example \cite[p. 297]{BS}}]
	If  $a<b$,  $q_0(a)\neq0$, and $q_0(b)\neq0$, then the number of different zeros (counting the multiple roots just once) of $q_0(x)$ in $[a,b]$ is $v(a)-v(b)$.
\end{theo}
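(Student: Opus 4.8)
The statement is the classical Sturm theorem, so the plan is to reproduce its standard proof, adapted to the sequence $\{q_0,q_1,\dots,q_m\}$ produced by the Euclidean algorithm with sign reversals (here $m=2$, with $q_1=q_0'=G_x$ and $q_2$ the constant displayed above). The whole argument rests on two structural facts about this chain, which I would record first, assuming for the moment that $q_0$ is \emph{squarefree} so that $q_m$ is a nonzero constant. First, no two consecutive terms $q_i,q_{i+1}$ can vanish at the same point: otherwise, running the division relations $q_{i-1}=c_iq_i-q_{i+1}$ backwards forces $q_0$ and $q_1=q_0'$ to share that root, i.e. $q_0$ would have a multiple root there. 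Second, at any zero $r$ of an intermediate term $q_i$ ($0<i<m$) one has $q_{i-1}(r)=-q_{i+1}(r)\neq0$, immediate from the same remainder relation.

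The core of the proof is to show that $v(x)$, as a function of $x$, is locally constant away from the roots of $q_0$ and drops by exactly one at each such root; since $q_0(a),q_0(b)\neq0$ by hypothesis, summing these jumps over $[a,b]$ then yields $v(a)-v(b)=\#\{\text{distinct roots of }q_0\text{ in }[a,b]\}$. To establish the local behaviour I would examine separately what happens as $x$ sweeps past a zero $r$ of each term. If $r$ is a zero of an intermediate $q_i$, then by the second structural fact $q_{i-1}$ and $q_{i+1}$ keep opposite, nonzero signs on a neighbourhood of $r$, so within the triple $(q_{i-1},q_i,q_{i+1})$ there is exactly one sign change on either side of $r$ regardless of the sign of $q_i$; hence $v$ is unchanged. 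If $r$ is a simple zero of $q_0$, then $q_1=q_0'$ has constant nonzero sign near $r$ while $q_0$ changes sign through $r$; a short check of the sub-cases (the sign of $q_0'$ against the direction of crossing) shows the pair $(q_0,q_1)$ carries one sign change just before $r$ and none just after, so $v$ decreases by precisely one.

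Finally I would remove the squarefree assumption, which is where the only real care is needed. Let $d=\gcd(q_0,q_0')$; every element of the chain is divisible by $d$, and the reduced sequence $q_i/d$ is exactly the Sturm chain of the squarefree part $q_0/d$, whose real roots are the distinct real roots of $q_0$. Since dividing every term by the common factor $d(x)$ at a point where $d(x)\neq0$ multiplies the whole sequence by a single sign and so preserves the number of sign changes, $v(x)$ agrees with the reduced count at all points away from the roots of $d$ — in particular at $a$ and $b$ — and the squarefree case already treated applies to $q_0/d$. I expect this passage to the squarefree part, together with a clean bookkeeping of the endpoint convention (half-open versus closed interval), to be the main obstacle; the sign-alternation analysis itself is routine. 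In the quadratic situation actually used in the paper the difficulty evaporates, since the chain has only the three terms $q_0,q_1,q_2$ with $q_2$ constant and $q_0$ has at most a double root, so the distinct-root count is read off directly from $v(a)-v(b)$.
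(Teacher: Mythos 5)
The paper does not prove this statement at all: it is the classical Sturm theorem, quoted verbatim with a citation to Stoer--Bulirsch, so there is no in-paper argument to compare yours against. Your reconstruction is the standard textbook proof and is essentially sound: the two structural facts about the chain, the local constancy of $v$ away from roots of $q_0$, and the drop by one at each simple root are all correctly stated and correctly used. One small wrinkle deserves flagging in the reduction to the squarefree case: the reduced sequence $q_i/d$ with $d=\gcd(q_0,q_0')$ is \emph{not} literally the Sturm chain of $q_0/d$, because its second term is $q_0'/d$ rather than $(q_0/d)'$, and these differ as polynomials. The argument still goes through, but you need the extra observation that at each root $r$ of $q_0/d$ (equivalently, each distinct root of $q_0$, say of multiplicity $k$) one has $(q_0'/d)(r)=k\,(q_0/d)'(r)$ up to the common nonzero factor, so the two have the same sign there and the sign-crossing analysis for the leading pair is unaffected; in other words, $q_i/d$ is a \emph{generalized} Sturm sequence for $q_0/d$, which is all the local argument requires. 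As you note, in the paper's application the chain is just $\{q_0,q_1,q_2\}$ with $q_2$ constant in $x$ and $q_0$ quadratic, so none of this subtlety is actually needed there.
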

Thus, fixed $t\in[0,T]$, $a,b\in\mathbb{R}$ such that $G(t,a)$ and $G(t,b)$ differ from $0$, if $v(a)=v(b)$, then  $G(t,x)$ has definite sign in the interval $(a,b)$.
Note that Sturm's theorem is also valid for $a=-\infty$ or $b=+\infty$.

\section{Proofs of the Main Results}

The key to the proof of Theorems \ref{criterium1} and
\ref{criterium2} is to choose special values for
$\alpha,\beta, \eta$ to simplify~\eqref{eq:G}, and then to
apply Sturm's theorem so as to obtain the conditions for $G\geq 0$
($G\leq 0$) in $V$, when necessary.

In the case of $a_1(t)<0$ for all $t\in[0,T]$, our results
imply a certain definite sign of $a_2$. But a more general
criterion can be stated. This criterion was already obtained
in \cite{HL3}, but we include it here for completeness.

\begin{prop}[{\cite[Lemma 2.3]{HL3}}] \label{prop:signo_definido}
Assume that 
 $a_2$ has  definite sign in $[0,T]$, strict for some positive measure set.
Then Equation \eqref{ode:1} has at most one non-trivial limit cycle  with graph included in $V$.
\end{prop}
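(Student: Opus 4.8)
The plan is to apply Lemma~\ref{lema:principal} after choosing the free parameters in~\eqref{eq:G} so that $G$ factors through the two invariant curves. Assume without loss of generality that $a_2$ has positive definite sign, the negative case being entirely symmetric (exchanging the roles of $\geq$ and $\leq$ throughout). Taking $\alpha=\beta=-1$ and $\eta=0$ in~\eqref{eq:G} makes every coefficient involving $b_2$ and $a_1'/a_1$ vanish, leaving the factored expression
\[
G(t,x)=a_1(t)a_2(t)x^2-a_2(t)x=a_2(t)\,x\bigl(a_1(t)x-1\bigr).
\]
This is the key simplification: the cofactor combination $p_x-p_1-p_2$ cancels all terms except the product of $a_2$ with the two invariant factors $x$ and $a_1x-1$.

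Next I would establish the geometric fact that $x\bigl(a_1x-1\bigr)<0$ at every interior point $(t,x)\in V$, by running through the three defining cases. In case~(i) one has $x>0$ and $0<a_1x<1$, so $a_1x-1<0$; the same holds in case~(ii), where $x>0$ while $a_1x\le 0<1$ wherever $a_1\le 0$ and $a_1x<1$ wherever $a_1>0$. In case~(iii), with $a_1<0$, either $x>0$, giving $a_1x-1<0$, or $x<1/a_1<0$, which upon multiplying by $a_1<0$ gives $a_1x>1$, hence $a_1x-1>0$ together with $x<0$; in both subcases the product is negative. Thus $V$ is precisely the region lying strictly between the invariant curves $x=0$ and $a_1x=1$, where $x\bigl(a_1x-1\bigr)$ is negative.

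Combining the two steps, $G(t,x)=a_2(t)\,x\bigl(a_1x-1\bigr)\le 0$ for all $(t,x)\in V$, with strict inequality whenever $t\in P$, the positive-measure set on which $a_2>0$, and $x$ interior. The zeros of $a_1$, where $a_2$ may have poles, form a null set by hypothesis, so the sign condition holds off a zero-measure set of $t$ as required, while integrability of $a_2$ guarantees that the relevant integrals converge. Lemma~\ref{lema:principal} then yields at most one limit cycle in each connected component of $V$, which if it exists is hyperbolic and stable. Finally, since the components of $V$ separated by $x=\pm\infty$ are glued into a single region, this gives at most one non-trivial limit cycle with graph contained in $V$.

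The computation in the first step is routine once the multipliers $\alpha=\beta=-1$, $\eta=0$ are guessed, and the sign analysis is elementary; the points needing care are the sign flip in case~(iii) and the verification that the poles of $a_2$ are confined to the null set of zeros of $a_1$, so that Lemma~\ref{lema:principal} applies despite $G$ being unbounded there. I expect the main (and only minor) obstacle to be making the connectedness-after-gluing argument precise, so that ``at most one per component'' upgrades to ``at most one in total''.
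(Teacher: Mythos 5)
Your first two steps reproduce the paper's argument exactly: with $\alpha=\beta=-1$, $\eta=0$ the expression \eqref{eq:G} collapses to $G(t,x)=a_2(t)\,x\,(a_1(t)x-1)$, and your sign analysis of $x(a_1x-1)$ on $V$ is correct in all three cases, so Lemma~\ref{lema:principal} does give at most one limit cycle \emph{in each connected component} of $V$. The gap is the final step, which you flag as a ``minor obstacle'' but which is in fact the substantive remaining content of the proposition. When $a_1(t)<0$ for all $t$, the set $V=\{x>0\}\cup\{x<1/a_1(t)\}$ has two genuine connected components as a subset of the $(t,x)$-plane, and a periodic solution of the scalar equation, being bounded, has its graph entirely inside one of them. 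The ``gluing through $x=\pm\infty$'' is only a bookkeeping convention motivated by the planar application; it does not make $V$ connected for the purposes of Lemma~\ref{lema:principal}, so your argument only yields at most \emph{two} limit cycles in this case, not one.

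The paper closes this gap with a separate argument: it performs the change of variables $y=a_1(t)x$, which sends $V$ to $\tilde V=\{y<0\}\cup\{y>1\}$ and transforms \eqref{ode:1} into
\[
y'=\frac{1}{a_1(t)}\,y(y-1)\bigl(a_2(t)y-a_1(t)b_2(t)\bigr),
\]
and then compares solutions of this equation with those of the separated-variables equation $y'=b_2(t)\,y(y-1)$, using the definite sign of $a_2$; the details are in \cite[Lemma 2.3]{HL3}. Some argument of this kind (a comparison or interaction between the two components) is unavoidable, because the per-component count from the cofactor method cannot by itself rule out one cycle above $x=0$ and a second one below $x=1/a_1(t)$. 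You would need to supply this step to complete the proof.
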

\begin{proof}

Firstly, take $\alpha=\beta=-1$, and $\eta=0$. Then
\[G(t,x)=a_2(t)(a_1(t)x-1)x.\]
Therefore, if $a_2$ has definite sign, strict for some positive measure set, the same is the case for $G(t,x)$,
and there is at most one limit cycle with graph
in each of the connected components of $V$.

Thus it suffices to prove that if $V$ has two connected components, i.e., if
$a_1(t)<0$ for all $t$, then there is at most one limit cycle with graph in $V$.

The transformation $y=a_1(t)x$ reduces \eqref{ode:1} to
\begin{equation}\label{eq:y}
y'=\frac{1}{a_1(t)}y(y-1)\left(a_2(t)y-a_1(t)b_2(t)\right),
\end{equation}
and $V$ to $\tilde{V}=\{(t,y):y<0 \text{ or } y>1\}$.

If we assume that $a_2$ has definite sign, strict in some positive measure set, then the result follows comparing the solutions of \eqref{eq:y}
with the solutions of the separated variables equation $y'=b_2(t) y(y-1)$. See \cite[Lemma 2.3]{HL3} for the details.
\end{proof}

Now let us prove the first theorem.

\begin{proof}[Proof of Theorem \ref{criterium1}]
    Firstly, if $a_1(t)<0$ for every $t$ then, by
  Proposition~\ref{prop:signo_definido}, we conclude there
  is at most one positive limit cycle with graph in $V$. So
  we assume $a_1$ has zeros or $a_1>0$.

Choosing $\eta$ conveniently and $t$ such that $a_1(t)\neq 0$, we may write
\[
\begin{split}
G(t,x)=& (3 + \alpha + \beta) a_1(t) a_2(t) x^2  -  \left((2 + \alpha) a_2(t) + (2 + \alpha + \beta) a_1(t) b_2(t)\right) x
\\&+  (1 + \alpha)\left( b_2(t) +\eta\frac{a_1'(t)}{a_1(t)}\right).
\end{split}
\]

Consider~\eqref{eq:G} with $\alpha + \beta+2=0$. Then, for any $t$ such that $a_1(t)\neq0$,
\[
G(t,x)= a_1(t)a_2(t) x^2-(\alpha+2) a_2(t)x +(\alpha+1)\left( b_2(t) +\eta\frac{a_1'(t)}{a_1(t)}\right).
\]
Assume $b_2(t)+\eta a_1'(t)/a_1(t) \geq0$ for all $t\in[0,T]$, with the case $\leq0$ being analogous. Fix $\alpha>0$.
We shall prove that $G(t,x) \geq 0$ for every $(t,x)\in V$ such that $a_1(t)\neq0$, with strict inequality for every $t$ in some positive measure set.

Fix any $t\in [0,T]$ such that $a_1(t)\neq 0$. First, we shall consider the singular cases.

\textsc{Case 1}. Assume $a_2(t)=0$. Then trivially $G(t,x)\geq 0$ for every $x\geq 0$.

\textsc{Case 2}. Assume $b_2(t)+\eta a_1'(t)/a_1(t)=0$ and $a_2(t)\neq 0$. Then
\[
G(t,x)=a_1(t)a_2(t)x^2-(\alpha+2)a_2(t)x.
\]
If $a_1(t)<0$ then, by hypothesis, $a_2(t)\leq 0$. Then
$a_1(t)a_2(t)> 0$, and $G(t,x)\geq 0$ for every $x \geq 0$.
If $a_1(t)>0$, we only need to check that $G(t,x)\geq 0$ for
all $x\in (0,1/a_1(t))$.  By hypothesis,
$a_1(t)b_2(t)-a_2(t)+\eta a_1'(t)\geq 0$, and since
$a_1(t)b_2(t)+\eta a_1'(t)=0$ then $a_2(t)< 0$.  As the
function can be rewritten as
\[G(t,x)=a_2(t)(a_1(t)x-(\alpha+2))x,\]
and $x\geq 0$, we only need to check that the linear
function $G^*(t,x)=a_2(t)(a_1(t)x-(\alpha +2))$ is positive in the interval, so we must verify that it is
positive at the extrema of the interval $(0,1/a_1(t))$. But
\[
G^*(t,0)=-a_2(t)(\alpha+2)> 0,\quad
G^*(t,1/a_1(t))=a_2(t)\left(1-(\alpha+2) \right)> 0.
\]

For the remaining cases, consider the Sturm sequence
\[S(x)=\{q_0(x),q_1(x),q_2\},\]
where
\begin{align*}
q_0(x)&=G(t,x),\\
q_1(x)&=G_x(t,x)= 2 a_1(t)a_2(t)x-(\alpha+2)a_2(t),\\
q_2\ \ &= \frac{(\alpha+2)^2a_2(t)}{4 a_1(t)}+\frac{(\alpha+2)(\alpha+\beta+2)}{2(\alpha+\beta+3)}b_2(t) - (\alpha+1)\left(b_2(t)+\eta \frac{a_1'(t)}{a_1(t)} \right).
\end{align*}

\textsc{Case 3}. Let $t$ be such that $a_1(t)< 0$ and $a_2(t)(b_2(t)+\eta a_1'(t)/a_1(t)) \neq 0$. 
The Sturm sequences at $x=0$ and  $x=\infty$ are
\begin{align*}
&S(0)=\{(\alpha+1)(b_2(t)+\eta a_1'(t)/a_1(t)),-(\alpha+2)a_2(t),q_2 \}, \\
&S(\infty)=\{a_1(t)a_2(t),2a_1(t)a_2(t),q_2 \}.
\end{align*}
By hypothesis, $a_2(t)< 0$. Then
$a_1(t)a_2(t)> 0$. Therefore, the Sturm sequences $S(0)$ and $S(\infty)$ have the same changes of sign as the sequence
\[\{ 1,1,q_2\}.\]
Hence, $G(t,x)$ has no  zeros in $(0,\infty)$.
 As $G(t,0)> 0$, one has that $G(t,x)> 0$ for all
  $x\in(0,\infty)$.

\textsc{Case 4}. Let $t$ be such that $a_1(t)> 0$ and $a_2(t)(b_2(t)+\eta a_1'(t)/a_1(t)) \neq 0$. 

The Sturm sequences at $x=0$ and $x=1/a_1(t)$ are
\begin{align*}
& S(0)=\{(\alpha+1)(b_2(t)+\eta a_1'(t)/a_1(t)),-(\alpha+2)a_2(t),q_2 \}, \\
& S\left(\frac{1}{a_1(t)}\right)=\left\{\frac{(1 + \alpha) (
   a_1(t) b_2(t) -a_2(t) + \eta a_1')}{a_1(t)}, -\alpha a_2(t),q_2 \right\}.
\end{align*}
Since $b_2(t)+\eta a_1'(t)/a_1(t)>0$ and, by hypothesis, $a_1(t)b_2(t)-a_2(t)+\eta a_1'(t)\geq0$, if the inequality is strict then the Sturm sequences $S(0)$ and
$S(1/a_1(t))$ have the same signs as the sequence
\[
\{1,-a_2(t),q_2\}.
\]
Hence $G(t,x)$ has no zeros in the interval $(0,1/a_1(t))$.  As $G(t,0)> 0$, one has that $G(t,x)>0$ for all $x\in(0,1/a_1(t))$.

If $a_1(t)b_2(t)-a_2(t)+\eta a_1'(t)=0$, then $a_2(t)>0$ and $G(t,x)= a_2(t)(a_1(t)x^2-(\alpha+2)x+(\alpha+1)/a_1(t))>0$ for each $x \in (0,1/a_1(t))$, because the function $g(t,x)= G(t,x)/a_2(t)$ satisfies $g(t,0)>0, g(t,1/a_1(t))=0$, and $g_x(t,x)<0$ for each $x \in (0,1/a_1(t))$.

So far we have obtained that there is at most one
non-trivial limit cycle with graph included in $V$. Now we
will show that in this case there is no such limit cycle.
Assume that $b_2(t)+\eta a_1'(t)/a_1(t)$ differs from
zero on a positive measure set. To prove that there is no
limit cycle with graph in $V$, note that
\[
d'(0)= \exp \left(\int_{0}^{T}b_2(t)+\eta \frac{a_1'(t)}{a_1(t)}dt\right)-1 >0,
\]
so that the origin is unstable. By Lemma~\ref{lema:principal}, if there exists a limit cycle with graph in $V$ then
it is unstable, so that there is no limit cycle in the connected component of $V$ contained in $x\geq 0$.

If $b_2(t)+\eta a_1'(t)/a_1(t)\equiv 0$ then $d'(0)=0$ and
$$
d''(0)=-2\int_{0}^{T} a_2(t) \,dt >0.
$$
Hence, the null solution is unstable, so that there is no such limit cycle in the connected component of $V$ contained in $x\geq 0$.

Assume now that $a_1(t)<0$ for every $t$. In order to prove that there is no limit cycle contained in the connected
component of $V$ contained in $x\leq 0$, note that the stability of the limit cycle $x(t)=1/a_1(t)$ is
\[
d'(1/a_1(0))= \exp\left(\int_0^T \left(a_1(t)b_2(t)-a_2(t) + \eta a_1'(t)/a_1(t)\right) \,dt \right)-1 >0,
\]
so that the cycle $x(t)$ is unstable. Again by Lemma~\ref{lema:principal}, if there exists a limit cycle with graph in $V$ then
it is unstable, so that there is no  limit cycle in the connected component of $V$ contained in $x\leq 0$.
\end{proof}

Now, let us prove the second main result.

\begin{proof}[Proof of Theorem \ref{criterium2}]
	If $a_1(t)\leq0$ for every $t$, then $a_2$ has definite sign,   and by Proposition~\ref{prop:signo_definido} there is at most one limit cycle with graph contained in $V$.  So we may assume that either $a_1(t)>0$ for every $t$ or $a_1$ has zeros.

Consider~\eqref{eq:G} with $\alpha=-2$, $\beta=0$, and $\eta$ such that
\[
G(t,x)= a_1(t)a_2(t) x^2 -\left(b_2(t)+\eta\frac{a_1'(t)}{a_1(t)}\right).
\]
We shall prove that $G(t,x)$ has definite sign in $V$ for every $t$ such that $a_1(t)\neq 0$.

Firstly, assume that $b_2(t)+\eta a_1'(t)/a_1(t) \geq0$, with the other case  being analogous.
We shall prove that $G(t,x) \leq 0$ for all $(t,x)\in V$.

Fix $t\in [0,T]$. If $a_1(t)< 0$ then, by hypothesis, $a_2(t)\geq 0$, so that $a_1(t)a_2(t)\leq 0$ and $G(t,x)\leq 0$ for all $x\in [0,+\infty)$.

Now, if $a_1(t)> 0$, as the only extreme of $G(t,x)$ is at $x=0$, and $G(t,0)\leq 0$, we only need to prove that $G(t,1/a_1(t))\leq 0$. But, if $b_2(t)\neq 0$ then
\[
G\left(t,\frac{1}{a_1(t)}\right)=\frac{a_2(t)}{a_1(t)} - \left(b_2(t)+\eta\frac{a_1'(t)}{a_1(t)}\right)=\frac{a_2(t)-b_2(t)a_1(t)-\eta a_1'(t)}{a_1(t)}\leq 0.
\]

Finally, to prove that this upper bound is attained, it suffices to take $a_1,a_2,b_2\in\mathbb{R}$,
$a_1>0$, $0<b_2/a_2<a_1$.
\end{proof}

\section{Planar Polynomial Differential Systems with Homogeneous Singularities}

In this section, we show some examples of applications of the results to planar systems.

\subsection*{Rigid systems}
A family that is easily transformed into Abel equations (or into generalized Abel equations) is that of
 rigid systems, i.e., systems of the form
\begin{equation}\label{eq:rigid}
\begin{cases}
x'=-y+x p(x,y),\\
y'=\phantom{-}x+yp(x,y),
\end{cases}
\end{equation}
where $p$ is a polynomial.

These systems have been studied in~\cite{GPT,GT} for instance.
After a change to polar coordinates, \eqref{eq:rigid} becomes
\begin{equation}\label{eq:rigidabel}
r'=rp(r\cos\theta,r\sin\theta),\quad \theta'=1.
\end{equation}
Moreover, there is a correspondence between limit cycles of \eqref{eq:rigid} and positive limit cycles of~\eqref{eq:rigidabel}.

In order to have an Abel equation (though the methods of Section~\ref{sec:2} could be applied for any degree) with sufficient degrees
of freedom to apply Theorem~\ref{criterium2}, let $p$ be of the form
\[
p(x,y)=p_{00}+\sum_{i=0}^k p_{i,k-i} x^i y^{k-i}+\sum_{i=0}^{2k} p_{i,2k-i} x^i y^{2k-i}.
\]
After the change of variables $\rho=r^k$, \eqref{eq:rigidabel} becomes the Abel equation
\begin{equation}\label{eq:rigidrho}
\begin{split}
\rho'=&k p_{00}\rho+k\left(\sum_{i=0}^k p_{i,k-i} \cos^i \theta \sin^{k-i} \theta\right)\rho^2\\
&+ k \left(\sum_{i=0}^{2k} p_{i,2k-i} \cos^i \theta \sin^{2k-i} \theta \right)\rho^3.
\end{split}
\end{equation}
If \eqref{eq:rigidrho} has an invariant curve with poles then the study of positive limit cycles
can be reduced to the study of limit cycles with graph in $V$.

\begin{exam}
Consider~\eqref{eq:rigid} with
\[
p(x,y)=1 - \frac{1}{2} x^4 y^2 + x^3 y^3 - \frac{5}{2} x^2 y^4 + x y^5 - 2 x^6 y^6 +
 3 x^5 y^7 - x^4 y^8.
\]
It is easy to check that $xy=1$ is an invariant curve for the planar system. Moreover, 
$x^3y^3=1$ is also an invariant curve. Then, after the change to polar coordinates and the
change $\rho=r^6$, $\rho \cos^3\theta\sin^3\theta=1$ is an invariant curve of \eqref{eq:rigidrho}.
Dividing by $a_1(\theta) \rho -1$, where $a_1(\theta)=\cos^3\theta\sin^3\theta$,
one obtains
\[
a_2(\theta)= -12 \cos^3 \theta \sin^3\theta + 18 \cos^2 \theta \sin^4\theta - 6 \cos \theta \sin^5\theta,\]\[
b_2(\theta)=6 + 3 \cot \theta  - 3 \tan \theta.
\]

Now let us check that Theorem~\ref{criterium2} is satisfied. In order for $b_2+\eta a_1'/a_1$ to have definite sign,
we must choose $\eta=-1$, obtaining $b_2+\eta a_1'/a_1=6$.

To verify that (i) and (ii) are satisfied, note that $a_1$
and $a_2$ are $\pi$-periodic, so that it suffices to show
that they are satisfied in $(-\pi/2,\pi/2)$.  Dividing by
$\cos^6\theta$, one obtains
\[
\frac{a_1(\theta)}{\cos^6 \theta}=\tan^3 \theta,\quad
\frac{a_2(\theta)}{\cos^6 \theta}=-12 \tan^3\theta + 18 \tan^4\theta - 6 \tan^5\theta.
\]
Now we define the auxiliary polynomials
\[
p_1(t)=\frac{a_1(\arctan t)}{\cos^6 \arctan t}=t^3,\quad
p_2(t)=\frac{a_2(\arctan t)}{\cos^6 \arctan t}=-12 t^3 + 18 t^4 - 6 t^5,\]\[
p_3(t)=\frac{a_1(\arctan t)b_2(\arctan t)-a_2(\arctan t)-a_1'(\arctan t)}{\cos^6 \arctan t}=
6 t^3 (3 - 3 t + t^2).
\]
Note that the signs of $a_1,a_2,a_1b_2-a_2+\eta a_1'$ in $\theta\in(-\pi/2,\pi/2)$
are the signs of $p_1,p_2,p_3$ in $t=\tan \theta\in \mathbb{R}$, respectively.
Therefore, as $p_1(t)$ is negative for $t<0$ and positive for $t>0$,
it only remains to check that $p_2(t)>0$ for $t<0$ and that $p_3(t)>0$ for $t>0$.
But this is immediate as $p_2$ has the roots $0,1,2$ and $p_3$ has only the root $0$ (and two complex roots).

Moreover, as $a_1$, $a_2$, and $a_1a_2$ have changes of sign, the criteria in Proposition~\ref{prop:hl}
are not satisfied.

\end{exam}

\subsection*{Homogeneous systems}
Consider the homogeneous planar system studied in \cite{CLl,HL5}:
\begin{align}\label{sistema plano}
x'&= a x -y	+P_n(x,y),\\
y'&= x+a y +Q_n(x,y), \notag
\end{align}	
where $a\in\mathbb{R}$ and $P_n,Q_n$ are homogeneous polynomials of degree $n$.
In polar coordinates, \eqref{sistema plano} is written as
\begin{align*}
r'&= a r+ \varphi (\theta)r^n,\\
\theta'&=1+ \psi(\theta)r^{n-1},\notag	
\end{align*}	
where
\begin{align*}
\varphi(\theta)&=P_n(\cos\theta,\sin\theta) \cos\theta+Q_n(\cos\theta,\sin\theta)\sin\theta,\\
\psi(\theta)&= Q_n(\cos\theta,\sin\theta)\cos\theta-P_n(\cos\theta,\sin\theta)\sin\theta.	
\end{align*}
Note that $\psi(\theta)$ has a finite number of zeros in $[0,2\pi]$.	
Since limit cycles of \eqref{sistema plano} surrounding the origin do not intersect the curve in the $(r, \theta)$ plane, $1+ \psi(\theta)r^{n-1}=0$
(see~\cite{CLl}), they can be determined through the  limit cycles of the scalar equation
\begin{equation}\label{ode:1polar}
	\frac{dr}{d\theta}= \frac{a r+ \varphi (\theta)r^n}{1+ \psi(\theta)r^{n-1}}, \quad \theta \in \mathbb{R}.
\end{equation}

We may assume that $\psi(\theta)$ is not identically null since otherwise \eqref{ode:1polar} would be a Ricatti equation that has at most one non-null limit cycle.
	
Now, using Cherkas's change of variable \cite{Che}
\begin{equation*}
\rho= \frac{r^{n-1}}{1+ \psi(\theta)r^{n-1}},	
\end{equation*}	
Equation \eqref{ode:1polar} transforms into the Abel equation
\begin{equation}\label{ode:1cherkas}
\begin{split}\frac{d\rho}{d\theta}&=(\psi \rho -1)((n-1)(a\psi-\varphi)\rho-(n-1)a)\rho-\psi' \rho^2 \\
&=(n-1)(a\psi-\varphi)\psi\rho^3+((n-1)(\varphi-2a\psi)-\psi')\rho^2+ (n-1)a \rho,
\end{split}	
\end{equation}	
with invariant curves $\rho=0$ and $\psi(\theta)\rho-1=0$.

Moreover, as the change of variable is equivalent to
\[
r^{n-1}=\frac{\rho}{1-\psi(\theta)\rho},
\]
the set $\{(\theta,r) : \theta \in\mathbb{R}, r>0 \}$ is transformed into the region $V$ defined above (see~\cite{CLl,GL} for more details).

Therefore, Equation \eqref{ode:1cherkas} is of the form \eqref{ode:1} with
\begin{equation*}
a_1= \psi, \quad a_2= (n-1)(a\psi-\varphi),  \quad b_2= (n-1)a+\frac{\psi'}{\psi}.	
\end{equation*}	

In order to apply Theorems~\ref{criterium1} and \ref{criterium2}, it is necessary that
\[
b_2+\eta a_1'/a_1=(n-1)a-(1+\eta)\frac{\psi'}{\psi}
\]
has definite sign. To allow $\psi$ to have zeros, we choose $\eta=-1$.

Now, Theorems \ref{criterium1} and \ref{criterium2} have the following consequences, respectively:
\begin{coro}\label{coro:1}
Assume that $a> 0\ (<0)$ and that for each  $\theta \in[0,2\pi]$ the following conditions hold:
\begin{itemize}
	\item[(i)] if $\psi(\theta)< 0$  then $a\psi(\theta)-\varphi(\theta)\leq 0\ (\geq0)$,
	\item[(ii)] if $\psi(\theta)>0$ then $\varphi(\theta)\geq 0 \ (\leq0)$,
\end{itemize}
with the inequalities in (i) and (ii) being strict for every $\theta$ in some positive measure set.
Then Equation~\eqref{sistema plano} has no limit cycles surrounding the origin.
\end{coro}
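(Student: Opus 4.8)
The plan is to recognize Corollary~\ref{coro:1} as a direct specialization of Theorem~\ref{criterium1} to the Abel equation \eqref{ode:1cherkas} induced by the planar system \eqref{sistema plano}. First I would recall from the preceding discussion that passing to polar coordinates and applying Cherkas's change turns \eqref{sistema plano} into \eqref{ode:1cherkas}, an equation of the form \eqref{ode:1} with $a_1=\psi$, $a_2=(n-1)(a\psi-\varphi)$, and $b_2=(n-1)a+\psi'/\psi$, and that this transformation carries the region $\{r>0\}$ onto $V$. Since we must allow $\psi$ to vanish, I would fix $\eta=-1$; then $b_2+\eta a_1'/a_1=(n-1)a$, which (using $n-1>0$) is a nonzero constant with the sign of $a$, hence of positive definite sign when $a>0$ and of negative definite sign when $a<0$. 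This supplies the standing hypothesis of Theorem~\ref{criterium1}.

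The second step is the bookkeeping that matches the two sets of hypotheses. A short computation gives the two identities $a_2=(n-1)(a\psi-\varphi)$ and $a_1 b_2-a_2+\eta a_1'=(n-1)\varphi$, the second following after cancelling the $(n-1)a\psi$ and $\pm\psi'$ terms. Because $n-1>0$, the sign of $a_2$ coincides with that of $a\psi-\varphi$, and the sign of $a_1 b_2-a_2+\eta a_1'$ coincides with that of $\varphi$. Consequently, when $a>0$, hypothesis (i) of the Corollary (if $\psi<0$ then $a\psi-\varphi\le 0$) is exactly condition (i) of Theorem~\ref{criterium1} (if $a_1<0$ then $a_2\le 0$), and hypothesis (ii) of the Corollary (if $\psi>0$ then $\varphi\ge 0$) is exactly condition (ii) of the theorem (if $a_1>0$ then $a_1 b_2-a_2+\eta a_1'\ge 0$); the case $a<0$ reproduces the parenthetical versions. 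The strictness on a positive measure set transfers verbatim. Applying Theorem~\ref{criterium1} then yields that \eqref{ode:1cherkas} has no non-trivial limit cycle with graph in $V$.

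The final step is to transport this conclusion back to \eqref{sistema plano}. Here I would invoke the correspondence set up before the statement: limit cycles of \eqref{sistema plano} surrounding the origin do not meet $1+\psi(\theta)r^{n-1}=0$, so they are in bijection with the positive limit cycles of \eqref{ode:1polar}, which Cherkas's change identifies with the limit cycles of \eqref{ode:1cherkas} having graph in $V$; the trivial solution $\rho=0$ corresponds to the origin and is excluded. Hence the absence of non-trivial cycles in $V$ forces the absence of limit cycles surrounding the origin, which is the assertion.

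I expect the only genuinely delicate point to be this last translation, namely verifying that \emph{surrounding the origin} corresponds precisely to \emph{non-trivial with graph in $V$} — in particular that such a planar cycle never crosses the locus $\theta'=0$, so that \eqref{ode:1polar} and its Cherkas transform faithfully record it. The algebraic verification in the first two paragraphs is entirely routine once the two identities for $a_2$ and $a_1 b_2-a_2+\eta a_1'$ are in hand.
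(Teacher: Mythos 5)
Your proposal is correct and follows exactly the route the paper intends: the corollary is presented as a direct consequence of Theorem~\ref{criterium1} via the identifications $a_1=\psi$, $a_2=(n-1)(a\psi-\varphi)$, $b_2=(n-1)a+\psi'/\psi$ with $\eta=-1$, so that $b_2+\eta a_1'/a_1=(n-1)a$ has definite sign and $a_1b_2-a_2+\eta a_1'=(n-1)\varphi$, together with the correspondence (cited from Carbonell--Llibre) between limit cycles surrounding the origin and limit cycles with graph in $V$. Your two identities and the sign bookkeeping match the paper's setup verbatim.
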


\begin{coro} 
Assume that $a> 0\ (<0)$ and that for each  $\theta \in[0,2\pi]$ the following conditions hold:
\begin{itemize}
	\item[(i)] if $\psi(\theta)< 0$  then $a\psi(\theta)-\varphi(\theta)\geq 0\ (\leq0)$,
	\item[(ii)] if $\psi(\theta)>0$ then $\varphi(\theta)\geq 0 \ (\leq0)$,
\end{itemize}
with the inequalities in (i) and (ii) being strict for every $\theta$ in some positive measure set.
Then Equation~\eqref{sistema plano} has at most one limit cycle surrounding the origin.
\end{coro}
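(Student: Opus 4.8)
The plan is to verify that, with the dictionary recalled above,
\[
a_1=\psi,\qquad a_2=(n-1)(a\psi-\varphi),\qquad b_2=(n-1)a+\frac{\psi'}{\psi},
\]
and the choice $\eta=-1$, the Abel equation \eqref{ode:1cherkas} satisfies every hypothesis of Theorem~\ref{criterium2}, and then to transfer the resulting single-cycle bound back to \eqref{sistema plano} through the reductions set up in this section. Here $n\ge 2$, so $n-1>0$, and I treat the case $a>0$ (the case $a<0$ being entirely analogous, switching all inequalities). With $\eta=-1$ the $\psi'/\psi$ term cancels and $b_2+\eta a_1'/a_1=(n-1)a$.

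First I would dispose of the auxiliary hypotheses. Since $(n-1)a>0$, the function $b_2+\eta a_1'/a_1=(n-1)a$ has positive definite sign, and the choice $\eta=-1$ is exactly what removes the poles of $b_2$ at the zeros of $\psi$. To see that $b_2$ is not identically null, note that if $b_2\equiv0$ then $\psi'/\psi\equiv-(n-1)a$ on each interval where $\psi\neq0$, forcing $|\psi(\theta)|=\mathrm{const}\cdot e^{-(n-1)a\theta}$, strictly monotone and hence incompatible with the periodicity of the trigonometric function $\psi$ when $a\neq0$. (If moreover $a_2\equiv 0$ the equation is Riccati and the bound holds a fortiori, so I may assume $a_2\not\equiv0$.)

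Next I would translate conditions (i) and (ii) of Theorem~\ref{criterium2}. Because $n-1>0$, the sign of $a_2=(n-1)(a\psi-\varphi)$ equals the sign of $a\psi-\varphi$; hence at the points where $a_1=\psi<0$ the requirement $a_2\ge0$ is precisely hypothesis~(i). For condition~(ii), a short computation with $\eta=-1$ gives
\[
a_1b_2-a_2+\eta a_1'=\big((n-1)a\psi+\psi'\big)-(n-1)(a\psi-\varphi)-\psi'=(n-1)\varphi,
\]
so at the points where $a_1=\psi>0$ the inequality $a_1b_2-a_2+\eta a_1'\ge0$ is equivalent to $\varphi\ge0$, which is hypothesis~(ii). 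The strict inequalities on a positive measure set carry over verbatim and supply the set $P$ required by the theorem. Thus Theorem~\ref{criterium2} applies, and \eqref{ode:1cherkas} has at most one non-trivial limit cycle with graph contained in $V$.

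Finally I would reverse the reductions: by Cherkas's change the limit cycles of \eqref{ode:1cherkas} with graph in $V$ correspond to the positive ($r>0$) limit cycles of the polar equation \eqref{ode:1polar}, which in turn correspond to the limit cycles of \eqref{sistema plano} surrounding the origin, while the trivial solution $\rho=0$ corresponds to the origin itself rather than to a surrounding cycle. The step requiring most care will be this last transfer: one must check that the correspondence between surrounding cycles of \eqref{sistema plano} and cycles in $V$ of \eqref{ode:1cherkas} is a bijection preserving multiplicity, so that ``at most one'' is genuinely preserved. This is exactly the reduction recalled earlier in the section, whose validity rests on the fact that limit cycles surrounding the origin do not meet the curve $1+\psi(\theta)r^{n-1}=0$; granting it, the single-cycle bound transfers and the corollary follows.
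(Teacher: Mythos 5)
Your proposal is correct and follows exactly the route the paper intends: the paper presents this corollary as a direct consequence of Theorem~\ref{criterium2} via the dictionary $a_1=\psi$, $a_2=(n-1)(a\psi-\varphi)$, $b_2=(n-1)a+\psi'/\psi$ with $\eta=-1$, and your computations $b_2+\eta a_1'/a_1=(n-1)a$ and $a_1b_2-a_2+\eta a_1'=(n-1)\varphi$ are precisely the verifications needed, together with the correspondence between cycles in $V$ and cycles surrounding the origin already established in that section. Your additional check that $b_2\not\equiv0$ (via periodicity of $\psi$ and $a\neq0$) is a detail the paper leaves implicit but which is indeed required by the theorem's hypotheses.
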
	

We shall show an example of a planar polynomial differential
system with homogeneous cubic nonlinearities which satisfies
conditions (i) and (ii) in Corollary~\ref{coro:1} but
does not satisfy any of the conditions in \cite[Theorem 1,
Theorem 3, and Corollary 4]{HL5}.

To that end, consider \eqref{ode:1cherkas}.  A first comment
is that when the functions $\psi$ and $\varphi$ are
homogeneous trigonometric polynomials of odd degree then
conditions (i) and (ii) in Corollary~\ref{coro:1} imply that
$\varphi$ and $\psi$ have the same zeros. Indeed, in this
case, the functions satisfy
$\psi(\pi+\theta)=-\psi(\theta)$,
$\varphi(\pi+\theta)=-\varphi(\theta)$, $\theta\in[0,2\pi]$.
Assume $a>0$, with the other case being analogous.  By
Corollary~\ref{coro:1}, one has:
\begin{enumerate}
	\item If $\psi(\theta)< 0$ then $a\psi(\theta)-\varphi(\theta)\leq 0$. Moreover, as $\psi(\pi+\theta)=-\psi(\theta)>0$ then $\varphi(\theta+\pi)=-\varphi(\theta)\geq 0$. Hence $a\psi(\theta)\leq \varphi(\theta)\leq0$.
	\item If $\psi(\theta)>0$ then $\varphi(\theta)\geq 0$. Since $\psi(\theta+\pi)<0$ then $a\psi(\pi+\theta)-\varphi(\theta+\pi)\leq0$, i.e., $a \psi(\theta)-\varphi(\theta) \geq0$. Hence
$ a\psi(\theta)\geq \varphi(\theta)\geq0$.
	\end{enumerate}
In particular, $\psi(\theta)$ and $\varphi(\theta)$ have the same zeros and $(a\psi-\varphi)\varphi$ has positive definite sign, and the number of limit cycles is bounded (see~\cite{CLl,Pliss}), so we shall look for examples with more degrees of freedom.

\begin{exam}
Let us consider
the cubic system
\begin{equation}\label{systemplanar}
	\begin{split}
	x'&= a x-y+P_3(x,y), \\
	y'&=x+ay+Q_3(x,y),
	\end{split}
\end{equation}	
where
\begin{align*}
P_3(x,y)&=p_3x^3+p_2x^2y+p_1xy^2+p_0y^3,\\
Q_3(x,y)&=q_3x^3+q_2x^2y+q_1xy^2+q_0y^3,	
\end{align*}
with $p_0,\dots,p_3,q_0,\dots,q_3 \in\mathbb{R}$.	
Then
\begin{align*}
\varphi(\theta)&=p_3 \cos^4\theta+ (p_2+q_3) \cos^3\theta \sin\theta+ (p_1+q_2) \cos^2\theta \sin^2\theta \\
&\phantom +(p_0+q_1) \cos\theta \sin^3\theta +q_0 \sin^4\theta, \\
\psi(\theta)&= q_3 \cos^4\theta+ (q_2-p_3) \cos^3\theta \sin\theta+ (q_1-p_2) \cos^2\theta \sin^2\theta \\
&\phantom +(q_0-p_1) \cos\theta \sin^3\theta -p_0 \sin^4\theta.
\end{align*}

Note that $\varphi(\theta)=\varphi(\theta+\pi)$ and $\psi(\theta)=\psi(\theta+\pi)$. Then it suffices to check (i) and (ii) of Corollary \ref{coro:1} for $\theta \in [-\pi/2,\pi/2)$. Therefore, we can divide both $\psi$ and $\varphi$ by $\cos^4\theta$ to obtain a polynomial in $\tan \theta$. Note that this transformation does not affect the criteria.
The change of variables $t=\tan\theta$ transforms the interval $(-\pi/2,\pi/2)$ into $\mathbb{R}$. Define the polynomials
\[
P_\psi(t)=\frac{\psi(\arctan t)}{\cos^4\arctan t}=- p_0 t^4 +(q_0 - p_1)t^3 + (q_1-p_2)t^2 +(q_2-p_3)t + q_3,
\]
\[
P_\varphi(t)=\frac{\varphi(\arctan t)}{\cos^4\arctan t}= q_0 t^4 + (p_0+q_1)t^3+(p_1+q_2)t^2+(p_2+q_3)t+p_3.
\]
Then the signs of $\psi,\varphi,a\psi-\varphi$ in $\theta\in(-\pi/2,\pi/2)$ are the same as
the signs of $P_\psi,P_\varphi,a P_\psi-P_\varphi$ in $t=\tan \theta\in\mathbb{R}$.

Now choose
\begin{equation}\label{coefficients}\begin{split}
 a=1/2, \ &p_0=-1,\quad p_1=\frac{20731}{20000}, \quad p_2=\frac{-19}{1000}, \quad p_3=\frac{9}{10000},\\
& q_0=\frac{1}{2},\quad  q_1= \frac{2}{5},\quad  q_2=\frac{-17631}{20000}, \quad q_3=0.
\end{split}
\end{equation}
With this choice of coefficients, one has
\[
P_\psi(t)=\frac{(t-1 ) t (17649 + 9269 t + 20000 t^2)}{20000}.
\]
So $\psi(t)\leq 0$ for $t\in [0,1]$, and $\psi(t)>0$ elsewhere. Also,
\[
P_\varphi(t)=\frac{(10 t-9) (10t-1) (1 - 10 t + 50 t^2)}{10000},
\]
which has just two simple zeros, at $1/10$ and $9/10$, and is positive for $t\not\in (1/10,9/10)$,
so that (ii) of Corollary~\ref{coro:1} is satisfied. Now, to check that
(i) is also satisfied, it suffices to verify that $a P_\psi(t)-P_\varphi(t)\leq 0$ for $t\in[0,1]$.
This can be done by using Sturm's theorem or with any CAS with the appropiate command (for
instance, in Mathematica the command {\it CountRoots}).

Finally, we verify that system \eqref{systemplanar} with coefficients given by \eqref{coefficients} does not satisfy the hypotheses in Theorem 1, Theorem 3, and Corollary 4 in \cite{HL5}.

Define
\begin{equation*}\begin{split}
\omega_1(\theta)&=a \psi(\theta)-\varphi(\theta),\\
\omega_2(\theta)&= (n-1)(2a\psi(\theta)-\varphi(\theta))+ \psi'(\theta).
\end{split} \end{equation*}

We claim:
\begin{enumerate}
	\item There is no linear combination $\mu_1\omega_1(\theta)+ \mu_2\omega_2(\theta)$ where $\mu_1,\mu_2 \in\mathbb{R}$ which has definite sign.
	
	Define
		\[P_1(t)=\frac{\omega_1(\arctan t)}{\cos^4\arctan t},\quad P_2(t)=\frac{\omega_2(\arctan t)}{\cos^4\arctan t}.\]
		Recall that the signs of $P_1,P_2$ in $t\in\mathbb{R}$ are the signs of $\omega_1,\omega_2$ in $\theta=\arctan t\in(-\pi/2,\pi/2)$.
		
		Then $P_1,P_2$ are of degrees three and four, respectively, both with positive leading coefficient. Moreover, $P_1$ has a zero in $[-1/2,1/2]$, $t_0$,
		while $P_2(t)<0$ for all $t\in [-1/2,1/2]$. As $\omega_1$ has a change of sign, take $\mu_2\neq 0$ and consider $Q(t)=\mu_1 P_1(t)+\mu_2 P_2(t)$. The sign of $Q(t_0)$ is opposite to that of $Q(t)$ for $t$ close to $\infty$, so that $Q$ has no definite sign for any $\mu_1,\mu_2$.

	\item There are no $\nu_1,\nu_2\geq0$ such that $(a\nu_1)^2+ \nu_2^2 \neq0$ and $\omega_1(\theta)(\nu_1 a \psi(\theta)-\nu_2 \varphi(\theta))\leq0$.
	
	 The sign of $\omega_1(\theta)(\nu_1 a \psi(\theta)-\nu_2 \varphi(\theta))$ is the same as the sign
		of $P_1(t)(\nu_1 a P_\psi(t)-\nu_2 P_\varphi(t))$. In order to have definite sign,
		since $P_1$ is of degree three then $\nu_1 a P_\psi(t)-\nu_2 P_\varphi(t)$ must be an odd degree polynomial.
		But it is easy to check that this holds only for $\nu_1=\nu_2$, so that there is no $\nu_1,\nu_2>0$ such that
		$\omega_1(\theta)(\nu_1 a \psi(\theta)-\nu_2 \varphi(\theta))\leq 0$ unless $\nu_1=\nu_2=0$.

	\item Neither $\omega_1$ nor $\omega_2$  have  definite sign.
	
	This is a consequence of (1).
	
	\item Neither $-(n-1)\omega_1+\omega_2$ nor $-2(n-1)\omega_1+\omega_2$  have definite sign.
	
    This is a consequence of (1).
	
	\item Neither $a\omega_1\psi$ nor $ \omega_1\varphi$  have definite sign.
	
    Again the signs of $a\omega_1\psi$ and $\omega_1\varphi$ are
		the signs of $aP_1 P_\psi$ and $P_1P_\varphi$. Computing the roots of the polynomials,
		it is easy to check that they have no definite sign.
	
\end{enumerate}

\end{exam}

\end{document}